\documentclass[12pt, a4paper]{amsart}   	% use "amsart" instead of "article" for AMSLaTeX format
\usepackage[top=1.15in, bottom=1.15in, left=1.17in, right=1.17in]{geometry}

\usepackage[colorlinks=true, pdfstartview=FitV, linkcolor=blue, citecolor=blue, urlcolor=blue, breaklinks=true]{hyperref}
\usepackage{amssymb,amscd,graphics, amsfonts, multicol}
\usepackage{graphics}
\usepackage{stmaryrd}
\usepackage{DotArrow}
\usepackage{amsmath, amsthm,wasysym}
\usepackage{epsf}
\usepackage{xypic}
\numberwithin{equation}{section}
\usepackage{tikz-cd}
\usepackage{color}
\usepackage[normalem]{ulem}
\theoremstyle{plain}

\newtheorem{theorem}{Theorem}[section]
\newtheorem{corollary}[theorem]{Corollary}
\newtheorem{proposition}[theorem]{Proposition}
\newtheorem{lemma}[theorem]{Lemma}

\theoremstyle{definition}

\newtheorem{remark}[theorem]{Remark}

\newtheorem{example}[theorem]{Example}
\newtheorem{definition}[theorem]{Definition}
\definecolor{fondo}{rgb}{0.898,0.996,0.898}

\newcommand{\R}{{\mathbb R}}

\newcommand{\Z}{{\mathbb Z}}
\newcommand{\N}{{\mathbb N}}

\newcommand{\mcA}{{\mathcal A}}

\newcommand{\mcI}{{\mathcal I}}

\DeclareMathOperator{\GL}{GL}
\DeclareMathOperator{\SL}{SL}

\DeclareMathOperator{\LS}{\Pi}

\DeclareMathOperator{\LH}{P}
\DeclareMathOperator{\initial}{in}

\DeclareMathOperator{\supp}{supp}
\DeclareMathOperator{\Spec}{Spec}

\newcommand{\CC}{\mathbb{C}}
\newcommand{\ZZ}{\mathbb{Z}}
\newcommand{\NN}{\mathbb{N}}

\newcommand{\RR}{\mathbb{R}}
\title[Lecture hall polytopes and Lakshmibai-Seshadri paths]{Lecture hall polytopes and\\Lakshmibai-Seshadri paths}

\author{Rocco Chiriv\`i}
\address{Dipartimento di Matematica e Fisica ``Ennio De Giorgi'', Universit\`a del Salento, Lecce, Italy}
\email{rocco.chirivi@unisalento.it}

\author{Martina Costa Cesari}
\address{European Research Council Executive Agency, Brussels, Belgium}
\email{martina.costa.cesari@live.it}

\author{Xin Fang}
\address{Lehrstuhl f\"ur Algebra und Darstellungstheorie, RWTH Aachen, Pontdriesch 10-16, 52062 Aachen, Germany}
\email{xinfang.math@gmail.com}

\author{Peter Littelmann}
\address{Department Mathematik/Informatik, Universit\"at zu K\"oln, 50931, Cologne, Germany}
\email{peter.littelmann@math.uni-koeln.de}

\begin{document}

\begin{abstract} Using a bijection between the lattice points in a lecture hall polytope and Lakshimibai-Seshadri (L-S) paths, we prove the Koszul property of lecture hall polytopes in complete generality, and give new proofs for their Integral Decomposition Property and for a criterion on Gorenstein property.
\end{abstract}

\maketitle

%%%%%%%%%%%%%%%%%%%%%%%%%%%%%%%%%%%%%%%%%%%%%%%%%%%%%%%%%%%%%%%%%%
%%%%%%%%%%%%%%%%%%%%%%%%%%%%%%%%%%%%%%%%%%%%%%%%%%%%%%%%%%%%%%%%%%
%%%%%%%%%%%%%%%%%%%%%%%%%%%%%%%%%%%%%%%%%%%%%%%%%%%%%%%%%%%%%%%%%%
%%%%%%%%%%%%%%%%%%%%%%%%%%%%%%%%%%%%%%%%%%%%%%%%%%%%%%%%%%%%%%%%%%
%%%%%%%%%%%%%%%%%%%%%%%%%%%%%%%%%%%%%%%%%%%%%%%%%%%%%%%%%%%%%%%%%%

\section{Introduction}

The original lecture hall polytopes have been introduced by Bousquet-Mélou and Eriksson \cite{BousquetEriksson} in 1997, and they have been generalized by Savage and Schuster \cite{SS} in 2012 to $b$--lecture hall polytopes, where $b = (b_1, \ldots, b_n)$ is a finite sequence of positive numbers. The lecture hall polytopes introduced in \cite{BousquetEriksson} are $b$--lecture hall polytopes for $b=(1,2,\ldots,n)$. In the literature related to lecture hall polytopes the parameter is usually denoted by $s = (s_1, \ldots,s_n)$; here we prefer to denote it by $b$ since in the representation theory they are known as \emph{bonds} and are interpreted as a kind of vanishing orders \cite{CFL1,CFL3,CFL4}.

In \cite{SS} the authors study the $b$--lecture hall partitions from polyhedral geometric point of view. From then to recent years, the geometric and combinatorial properties of these polytopes have been extensively studied in combinatorics. For example, their Integral Decomposition Property (IDP) was proved in \cite{HibiOlsenTsuchiya, BrandenSolus}, and a criterion on the Gorenstein property has been studied still in \cite{HibiOlsenTsuchiya}.

On the other hand the Lakshmibai-Seshadri paths (L-S paths) \cite{LittelmannPaths2, LittelmannPaths} are key combinatorial objects from representation theory parametrizing the basis of the irreducible representations of a complex semi-simple Lie algebra (see also \cite{LittelmannPaths3,CFL0,CFL1,CFL3,CFL4}). The L-S paths can be used to construct a standard monomial theory; this is studied, for example, in \cite{Chirivi, LLM}.

The main new idea of this paper is to show that the integral points of the $s$-lecture hall polytopes (a.k.a. the $b$--lecture hall partitions) are in bijection with L-S paths with bonds $b$ via a simple linear transformation. This bridge links the lecture hall polytopes to representation theory.

Such a bijection allows us to translate results on L-S paths \cite{Chirivi} into those on $b$--lecture hall polytopes: the IDP is a direct consequence of the canonical decomposition of an L-S path (Section \ref{Sec:3}), and the criterion on Gorenstein property follows from a finite group action on the L-S paths (Section \ref{Sec:5}).

Another application is the following new result: we prove the Koszul property of arbitrary $b$--lecture hall polytopes. Such a property has been already proved for $b$--lecture hall polytopes with special $b$ in \cite{BrandenSolus} by working out a square-free quadratic Gr\"obner basis. Our proof of the Koszul property is established by an explicit description of a quadratic straightening law on the semi-group algebra of the L-S paths \cite{LLM, Chirivi} (see also \cite{CFL3} for a generalization), which is isomorphic to the Ehrhart ring of the lecture hall polytope. This straightening law allows us to construct a quadratic Gr\"obner basis, hence to prove the Koszul property.

\smallskip

In this paper we consider only L-S paths for a finite totally ordered set with bonds but the general context of a finite partially ordered set. Since the proofs are usually simpler for a totally ordered set, we have decided to include the full proofs in this paper to make it self contained and, in our opinion, more readable.

Finally, the bijection we propose here between L-S paths and integral points in lecture hall polytopes is not just at combinatorial level. We call \emph{lecture hall toric varieties} (for short LH-toric varieties) the projective toric varieties whose homogeneous coordinate rings are the Ehrhart algebras of $b$--lecture hall polytopes. By \cite{Chirivi}, the lecture hall toric varieties are the irreducible components of a semi-toric degeneration of Schubert varieties of semi-simple algebraic groups. Moreover each lecture hall toric variety appears as such a component as we show in the last section of this article.

\smallskip

\noindent{\bf Organization of the paper:} The paper is based on the bijection between lattice points in the lecture hall polytope and L-S paths, which is established in Section \ref{Sec:2}. The IDP is proved in Section \ref{Sec:3} and a criterion of the Gorenstein property is given in \ref{Sec:5}. The isomorphism between the semi-group algebra of the L-S paths and the Ehrhart algebra of the lecture hall polytope is explained in Section \ref{section:semigroup-algebra}. The Koszul property is shown in Section \ref{Sec:6}. In Section \ref{Sec:7} we show that there exists a semi-toric degeneration of certain Schubert varieties to the union of toric varieties associated to the $b$--lecture hall polytopes.

\vskip 10pt

\noindent{\bf Acknowledgements:} The work of X.F. is funded by the Deutsche Forschungsgemeinschaft: “Symbolic Tools in Mathematics and their Application” (TRR 195, project-ID 286237555). 

\smallskip
\noindent{\bf Disclaimer:} The views expressed are purely those of the authors and may not in any circumstances be regarded as stating an official position of the European Research Council Executive Agency and the European Commission.
\smallskip

\noindent{\bf Tool and computational resource disclosure:} During the preparation of this manuscript various AI tools have been used for bibliographical research only.

%%%%%%%%%%%%%%%%%%%%%%%%%%%%%%%%%%%%%%%%%%%%%%%%%%%%%%%%%%%%%%%%%%
%%%%%%%%%%%%%%%%%%%%%%%%%%%%%%%%%%%%%%%%%%%%%%%%%%%%%%%%%%%%%%%%%%
%%%%%%%%%%%%%%%%%%%%%%%%%%%%%%%%%%%%%%%%%%%%%%%%%%%%%%%%%%%%%%%%%%
%%%%%%%%%%%%%%%%%%%%%%%%%%%%%%%%%%%%%%%%%%%%%%%%%%%%%%%%%%%%%%%%%%
%%%%%%%%%%%%%%%%%%%%%%%%%%%%%%%%%%%%%%%%%%%%%%%%%%%%%%%%%%%%%%%%%%

\section{The bijection}\label{Sec:2}

We begin by defining the L-S paths for a set of bonds. Fix an integer $n\ge 1$ and a sequence of positive integers
\[
b = (b_1, b_2, \dots, b_n), \qquad b_j \in \ZZ_{>0},
\]
called the \emph{bonds}. The objects we are going to define all depend on the choice of the bonds $b$. For convenience we set $b_0 = b_{n+1} := 1$.
 
Let $L(b)$ be the lattice generated by the vectors $v_1,\dots,v_n, v_{n+1} \in \RR^{n+1}$ with
\[
v_j := \frac{e_j - e_{j+1}}{b_j}, \, j = 1,\ldots,n,\qquad\textrm{and}\qquad v_{n+1} := e_{n+1}
\]
where $e_1,\dots,e_n,e_{n+1}$ is the canonical basis of $\RR^{n+1}$. It is clear that $L(b)$ contains $\ZZ^{n+1}$.

\begin{definition}
The set of \emph{L-S paths}  with bond $b$ is
\[
\Pi(b) := L(b) \cap \RR^{n+1}_{\ge 0}.
\]
For $\pi = (a_1,\dots,a_{n+1})\in \Pi(b)$ define the \emph{support} $\supp\pi$ as the set of indices $1\leq j\leq n+1$ such that $a_j\neq 0$ and set $\deg(\pi) := a_1 + \cdots + a_{n+1} $, the \emph{degree} of the L-S path $\pi$. Note that if $\pi = m_1v_1 + \cdots + m_{n+1}v_{n+1}$ then $\deg(\pi) = m_{n+1}$; in particular $\deg(\pi)$ is a non-negative integer. We denote by $\Pi(b)_k$ the set of L-S paths of degree $k$.
\end{definition}

\begin{remark}
Thus $\Pi(b)$ is a submonoid of $L(b)$ and $\deg$ restricts to a monoid homomorphism $\Pi(b)\to\ZZ_{\ge 0}$.
\end{remark}

We give a different characterization of the L-S paths.
\begin{proposition}\label{prop:LS-characterization}
    A sequence of non-negative real numbers $\pi = (a_1,\dots,a_{n+1})$ is an L-S path if and only if
    \[
    \left\{
    \begin{array}{rcl}
        b_1 a_1 & \in &  \NN,\\
        b_2(a_1 + a_2) & \in &  \NN,\\
        & \vdots\\
        b_n(a_1 + \cdots + a_n) & \in &  \NN,\\
        a_1 + \cdots + a_{n+1} & \in &  \NN.
    \end{array}
    \right.
    \]
\end{proposition}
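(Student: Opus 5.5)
Since $\Pi(b) = L(b)\cap\RR^{n+1}_{\ge 0}$ and the statement already assumes non-negativity, the claim reduces to a description of the lattice $L(b)$: a vector $x=(a_1,\dots,a_{n+1})\in\RR^{n+1}$ lies in $L(b)$ if and only if $b_j(a_1+\cdots+a_j)\in\ZZ$ for $j=1,\dots,n$ and $a_1+\cdots+a_{n+1}\in\ZZ$. The sign conditions then turn $\ZZ$ into $\NN$ automatically, because each partial sum of non-negative numbers is non-negative. My plan is to prove this lattice description by changing coordinates to partial sums.

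Consider the linear isomorphism $\varphi:\RR^{n+1}\to\RR^{n+1}$ given by
\[
\varphi(a_1,\dots,a_{n+1}) := \bigl(b_1 a_1,\ b_2(a_1+a_2),\ \dots,\ b_n(a_1+\cdots+a_n),\ a_1+\cdots+a_{n+1}\bigr).
\]
It is invertible, being triangular with nonzero diagonal entries $b_1,\dots,b_n,1$. The claim is then exactly $\varphi(L(b))=\ZZ^{n+1}$, and it suffices to compute $\varphi$ on the generators.

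For $v_j=(e_j-e_{j+1})/b_j$ with $1\le j\le n$, the partial sums $a_1+\cdots+a_k$ of $v_j$ equal $1/b_j$ when $k=j$ and $0$ otherwise, since the contributions of $e_j$ and $e_{j+1}$ cancel once both are included. Hence $\varphi(v_j)=b_j\cdot(1/b_j)\,e_j=e_j$. For $v_{n+1}=e_{n+1}$, the only nonzero partial sum is the total one, which equals $1$, so $\varphi(v_{n+1})=e_{n+1}$.

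Thus $\varphi$ maps the generators of $L(b)$ bijectively onto the standard basis, so $\varphi(L(b))=\ZZ^{n+1}$; in particular $v_1,\dots,v_{n+1}$ form a basis of $L(b)$. Both directions of the proposition follow.
\begin{itemize}
\item[($\Rightarrow$)] If $\pi\in\Pi(b)$, then $\varphi(\pi)\in\ZZ^{n+1}$, and its entries are non-negative because the $a_i$ are, so every entry lies in $\NN$.
\item[($\Leftarrow$)] If every entry of $\varphi(\pi)$ lies in $\NN$, then $\pi=\varphi^{-1}(\varphi(\pi))\in\varphi^{-1}(\ZZ^{n+1})=L(b)$, and together with $a_i\ge 0$ this gives $\pi\in\Pi(b)$.
\end{itemize}
Incidentally, this also recovers the degree remark: if $\pi=\sum m_jv_j$, then $m_j$ is the $j$-th coordinate of $\varphi(\pi)$, and in particular $m_{n+1}=\deg\pi$.

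The argument has no real obstacle. The only step needing care is the generator computation, specifically checking that the $j$-th and $(j+1)$-st contributions of $v_j$ cancel in all partial sums beyond index $j$, so that $\varphi(v_j)=e_j$.
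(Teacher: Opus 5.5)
Your proof is correct and is essentially the paper's argument. The paper writes $\pi=\sum_j m_j v_j$ and computes $m_j=b_j(a_1+\cdots+a_j)$ (with $b_{n+1}=1$), which is exactly your observation that the triangular partial-sum map sends $v_j$ to $e_j$; your version simply makes explicit that $v_1,\dots,v_{n+1}$ form a basis of $\RR^{n+1}$, so these coordinates are unique and $\pi\in L(b)$ precisely when they are integers.
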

\begin{proof}
Let $\pi=(a_1,\dots,a_{n+1})\in\RR^{n+1}_{\ge0}$ and write $\pi = \sum_{j=1}^{n+1} m_j v_j$ with $m_j\in\RR$. A straightforward computation shows that $m_j = b_j(a_1 + \cdots + a_j)$ for $j = 1, \ldots, n+1$. Since $\pi\in L$ if and only if $m_j\in\ZZ$ for all $j$, the result follows.
\end{proof}

\smallskip

Let $N$ be the least common multiple of the bonds $b_1,\dots,b_n$ and let $N_j$ be the least common multiple of $b_j$ and $b_{j-1}$ for $j=1,\ldots,n+1$.
\begin{lemma}\label{lem:coordinates-in-frac}
    If $\pi = (a_1,\dots,a_{n+1})$ is an L-S path then $a_j\in \frac{1}{N_j}\ZZ$ for $j=1,\ldots,n+1$.
\end{lemma}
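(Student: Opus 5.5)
By Proposition~\ref{prop:LS-characterization}, the whole argument reduces to working with the partial sums. Set $S_j := a_1 + \cdots + a_j$ for $j = 0, \ldots, n+1$, with $S_0 := 0$. Then $a_j = S_j - S_{j-1}$. The proposition says exactly that $b_j S_j \in \NN$ for $j = 1, \ldots, n$, and that $S_{n+1} \in \NN$. Because we set $b_{n+1} = 1$ and $S_0 = 0$, both extreme conditions can be written uniformly: $b_j S_j \in \ZZ$ for every $j = 0, \ldots, n+1$ (with $b_0 = 1$ the case $j=0$ is trivial). The plan is therefore to show that each consecutive difference $S_j - S_{j-1}$ lies in $\frac{1}{N_j}\ZZ$.

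Fix $j \in \{1, \ldots, n+1\}$. From $b_j S_j \in \ZZ$ we get $S_j \in \frac{1}{b_j}\ZZ$. Since $b_j$ divides $N_j = \operatorname{lcm}(b_j, b_{j-1})$, this gives $S_j \in \frac{1}{N_j}\ZZ$: indeed $S_j = \frac{m}{b_j} = \frac{m\,(N_j/b_j)}{N_j}$ for some $m \in \ZZ$. The same argument applied to $S_{j-1}$, using $b_{j-1} \mid N_j$, gives $S_{j-1} \in \frac{1}{N_j}\ZZ$. Since $\frac{1}{N_j}\ZZ$ is an additive group, $a_j = S_j - S_{j-1} \in \frac{1}{N_j}\ZZ$.

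There is no real obstacle here. The only point needing care is the boundary indices $j = 1$ and $j = n+1$, and these are handled by the conventions $b_0 = b_{n+1} = 1$ together with $S_0 = 0$. For $j = 1$ we have $N_1 = b_1$ and $a_1 = S_1$. For $j = n+1$ we have $N_{n+1} = b_n$, with $S_{n+1} \in \ZZ$ and $S_n \in \frac{1}{b_n}\ZZ$.
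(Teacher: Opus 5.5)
Your proof is correct and follows the same route as the paper. You write $a_j = S_j - S_{j-1}$, note that $S_j \in \frac{1}{b_j}\ZZ$ and $S_{j-1} \in \frac{1}{b_{j-1}}\ZZ$ both lie in $\frac{1}{N_j}\ZZ$, and subtract. Your indexing, with the conventions $b_0 = b_{n+1} = 1$ and $S_0 = 0$ covering the boundary cases, is actually cleaner than the paper's: the paper subtracts the $j$-th partial sum from the $(j+1)$-th, which literally yields $a_{j+1}$ rather than $a_j$.
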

\begin{proof}
By Proposition~\ref{prop:LS-characterization}, $a_1 + \cdots + a_j\in\frac{1}{b_j}\ZZ$ and $a_1 + \cdots + a_{j+1}\in\frac{1}{b_{j+1}}\ZZ$. Subtracting these two equations we get $a_j\in\frac{1}{N_j}\ZZ$ as claimed.
\end{proof}

\smallskip

The \emph{Lecture Hall Polytope} with parameters $ b = (b_1,\ldots,b_n)$ is defined as
\[
\LH(b) := \left.\left\{
(x_1, x_2, \ldots, x_n)\,\in \mathbb{R}^n
\,\right|\,
0\leq\frac{x_1}{b_1}\leq\ldots\leq \frac{x_n}{b_n}\leq 1\right\}.
\]
It is clear that the dilation $k\cdot\LH(b)$ is given by the same inequalities with the rightmost $1$ replaced by $k$.

Consider the linear map from $\R^{n+1}$ to $\R^n$ defined by
\[
\varphi \,:\, (a_1, \ldots, a_n, a_{n+1})
\,\longmapsto\, (x_1, \ldots, x_n)
\]
where $x_i = b_i(a_1 + \cdots + a_i)$ for all $i=1,\ldots,n$.

\begin{proposition}\label{prop_bijection-LS-LH}
    The restriction of the map $\varphi$ is a bijection
    \[
    \varphi:\Pi(b)_k \longrightarrow k\cdot\LH(b)\,\cap\,\Z^n
    \]
    from the set of L-S paths of degree $k$ to the set of integral points in the $k$--dilated lecture hall polytope.
\end{proposition}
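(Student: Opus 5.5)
The plan is to use Proposition~\ref{prop:LS-characterization} to identify the image of $\varphi$ on $\Pi(b)$ exactly, and then to write down an explicit inverse. Set $s_i := a_1+\cdots+a_i$ for $\pi=(a_1,\dots,a_{n+1})\in\Pi(b)_k$, so that $x_i=b_is_i$. By Proposition~\ref{prop:LS-characterization} each $x_i=b_is_i$ lies in $\NN$, so $\varphi(\pi)\in\ZZ^n$. Non-negativity of the $a_j$ says $0\le s_1\le s_2\le\cdots\le s_n\le s_{n+1}=k$. Since $s_i=x_i/b_i$, this chain is precisely
\[
0\le \frac{x_1}{b_1}\le\cdots\le\frac{x_n}{b_n}\le k ,
\]
that is, $\varphi(\pi)\in k\cdot\LH(b)$. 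Hence $\varphi$ maps $\Pi(b)_k$ into $k\cdot\LH(b)\cap\ZZ^n$.

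For the inverse, take $x\in k\cdot\LH(b)\cap\ZZ^n$ and put $s_0:=0$, $s_i:=x_i/b_i$ for $1\le i\le n$, and $s_{n+1}:=k$. Define $a_j:=s_j-s_{j-1}$ for $j=1,\dots,n+1$ and $\psi(x):=(a_1,\dots,a_{n+1})$. The defining inequalities of $k\cdot\LH(b)$ give $a_j\ge 0$. The partial sums are $a_1+\cdots+a_i=s_i$, so $b_i(a_1+\cdots+a_i)=x_i\in\NN$ (it is non-negative because $x_i\ge 0$), and the total sum is $k\in\NN$. Proposition~\ref{prop:LS-characterization} then shows $\psi(x)\in\Pi(b)$, and its degree is $k$.

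It remains to check that $\varphi\circ\psi=\mathrm{id}$ and $\psi\circ\varphi=\mathrm{id}$. Both follow because an element of $\RR^{n+1}$ is determined by its partial sums $s_1,\dots,s_{n+1}$. For $\varphi\circ\psi$, the $i$-th coordinate of $\varphi(\psi(x))$ is $b_is_i=x_i$. For $\psi\circ\varphi$, the vector $\psi(\varphi(\pi))$ has the same partial sums $s_1,\dots,s_n$ as $\pi$, and also the same last partial sum $s_{n+1}=\deg\pi=k$. I do not expect a genuine obstacle here. The one point needing care is that $\varphi$ forgets $a_{n+1}$, so injectivity depends on fixing the degree $k$; this is why the statement is made degree by degree.
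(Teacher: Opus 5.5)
Your proof is correct and follows the paper's argument. You use Proposition~\ref{prop:LS-characterization} to place $\varphi(\pi)$ in $k\cdot\LH(b)\cap\ZZ^n$, and you invert through the partial sums, setting $a_i = x_i/b_i - x_{i-1}/b_{i-1}$ and $a_{n+1} = k - x_n/b_n$. Your explicit check that $\psi\circ\varphi=\mathrm{id}$ makes the injectivity rigorous, which the paper's proof leaves implicit.
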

\begin{proof} Let $\pi = (a_1,\ldots,a_{n+1})$ be an L-S path of degree $k$. Then $x_i := b_i(a_1 + \cdots + a_i)\in\ZZ$ by Proposition \ref{prop:LS-characterization}. Hence $\varphi(\pi) = (x_1, \ldots, x_{n+1})\,\in\,\Z^n$.

Moreover $x_i /b_i = a_1 + \cdots + a_i$, so using $\pi\in\RR^{n+1}_{\geq 0}$ we find
\[
\displaystyle 0\leq \frac{x_1}{b_1} \leq \frac{x_2}{b_2} \leq \cdots \leq \frac{x_n}{b_n}.
\]
Finally $x_n / b_n = a_1 + \cdots + a_n \leq a_1 + \cdots + a_{n+1}= k$. We have shown that $\varphi(\pi)\in k\cdot P(b) \cap \ZZ^n$.

Conversely, let $(x_1, \ldots, x_n)\in k\cdot\LH(b)\cap\Z^n$ and set $x_0 := 0$. Define $a_i := x_i / b_i - x_{i-1} / b_{i-1}$, for $i = 1,\ldots, n$, and $a_{n+1} := k - x_n / b_n$. It is clear that $\pi := (a_1, \ldots, a_{n+1})\in \RR^{n+1}_{\geq 0}$. Since $b_i(a_1 + \cdots + a_i) = x_i\in\ZZ$, we find that $\pi$ is an L-S path of degree $k$ and, finally, $\varphi(\pi) = (x_1, \ldots, x_n)$.
\end{proof}
\begin{remark}\rm
The first connection between LS-paths, LH-polytopes (the name came up 15 years later) and lattices has been observed by R. Dehy,
see \cite[Section 4.2.2]{Dehy}. She studies conditions under which these polytopes can be glued together to get a polytopal character model for Demazure modules.   
\end{remark}

%%%%%%%%%%%%%%%%%%%%%%%%%%%%%%%%%%%%%%%%%%%%%%%%%%%%%%%%%%%%%%%%%%
%%%%%%%%%%%%%%%%%%%%%%%%%%%%%%%%%%%%%%%%%%%%%%%%%%%%%%%%%%%%%%%%%%
%%%%%%%%%%%%%%%%%%%%%%%%%%%%%%%%%%%%%%%%%%%%%%%%%%%%%%%%%%%%%%%%%%
%%%%%%%%%%%%%%%%%%%%%%%%%%%%%%%%%%%%%%%%%%%%%%%%%%%%%%%%%%%%%%%%%%
%%%%%%%%%%%%%%%%%%%%%%%%%%%%%%%%%%%%%%%%%%%%%%%%%%%%%%%%%%%%%%%%%%

\section{The Integral Decomposition Property}\label{Sec:3}

The Integral Decomposition Property has already been proved in \cite{HibiOlsenTsuchiya} for certain classes of lecture hall polytopes, and in \cite{BrandenSolus} for all lecture hall polytopes (and certain generalizations).

This property is a direct consequence of the existence of the canonical decomposition of L-S paths proved in \cite{Chirivi}. We report here essentially the same proof of that paper.

\begin{proposition}\label{prop:ls-normality}
\begin{itemize}
\item[(i)] The monoid $\Pi(b)$ of L-S paths is saturated in $L(b)$, i.e. if $\pi\in L(b)$ and $k\pi\in\LS(b)$ for some non-negative integer $k$ then $\pi\in\LS(b)$.
\item[(ii)] If $\pi,\eta\in\LS(b)$ and $\pi - \eta\in\RR^{n+1}_{\ge0}$ then $\pi - \eta\in\LS(b)$.
\item[(iii)] If $\pi\in \LS(b)_k$ then there exists a unique decomposition $\pi = \pi_1 + \cdots + \pi_k$, with $\pi_i\in\LS(b)_1$ for $i=1,\ldots,k$, such that $\max\supp\pi_j\leq \min\supp\pi_{j+1}$ for $j=1,\ldots,k-1$. In particular, $\LS(b)$ is a normal monoid.
\end{itemize} 
\end{proposition}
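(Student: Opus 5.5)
(i) and (ii) follow from the fact that $\LS(b)=L(b)\cap\RR^{n+1}_{\ge0}$ is the intersection of a lattice with a convex cone. For (i), if $\pi\in L(b)$ and $k\pi\in\RR^{n+1}_{\ge0}$ with $k>0$, then $\pi\in\RR^{n+1}_{\ge0}$, hence $\pi\in\LS(b)$. The case $k=0$ is degenerate, and I will read the statement with $k\ge1$. For (ii), the hypotheses give $\pi-\eta\in L(b)$, since $L(b)$ is a group, and $\pi-\eta\in\RR^{n+1}_{\ge0}$, so $\pi-\eta\in\LS(b)$.

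For (iii) I will construct the decomposition greedily, using the criterion of Proposition~\ref{prop:LS-characterization}: a non-negative vector lies in $\LS(b)$ exactly when $m_j:=b_j(a_1+\cdots+a_j)\in\ZZ$ for $j\le n$ and its total sum is an integer.

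Write $S_j:=a_1+\cdots+a_j$, so that $0\le S_1\le\cdots\le S_{n+1}=k$ and $b_jS_j\in\ZZ$. For $i=1,\dots,k$, define $\pi_i$ as the "slice" of $\pi$ lying between heights $i-1$ and $i$: its $j$-th partial sum is $T^{(i)}_j:=\min(\max(S_j-(i-1),0),1)$. Then $\pi_i$ is non-negative, since the $T^{(i)}_j$ are non-decreasing in $j$, it has degree $1$, and $\sum_i T^{(i)}_j=S_j$, so $\sum_i\pi_i=\pi$.

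The support condition holds as well. The coordinates of $\pi_i$ are nonzero only at indices $j$ where the partial sums move within $(i-1,i]$. Let $j_0$ be the index where $S$ first reaches $i$. Then $\pi_i$ is supported in indices $\le j_0$, and $\pi_{i+1}$ is supported in indices $\ge j_0$, because $S_{j}<i$ for $j<j_0$.

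The key check is integrality, namely $b_jT^{(i)}_j\in\ZZ$, and this is immediate. Each $T^{(i)}_j$ is $0$, $1$, or $S_j-(i-1)$, and $b_j(S_j-(i-1))=b_jS_j-b_j(i-1)\in\ZZ$. So each $\pi_i\in\LS(b)_1$.

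For uniqueness, let $\pi=\sum\pi_i$ be any decomposition with the support condition. Then $\pi_1+\cdots+\pi_i$ has degree $i$ and is supported in $[1,\max\supp\pi_i]$, while $\pi_{i+1}+\cdots+\pi_k$ is supported in $[\min\supp\pi_{i+1},n+1]$, and these ranges overlap in at most one index. Hence the partial sums of $\pi_1+\cdots+\pi_i$ must equal $\min(S_j,i)$: below the overlap index they coincide with $S_j$, and above it they equal $i$. At the overlap index $j_0$ itself, $S_{j_0-1}<i\le S_{j_0}$ is forced by the degree count, so the cut point is determined. By induction this gives $\pi_i$ equal to the slice above.

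I expect this overlap bookkeeping to be the main obstacle. The delicate cases are zero coordinates, and partial sums exactly equal to an integer, where the choice of $j_0$ could be ambiguous. I would handle them by defining $j_0$ as the minimal index with $S_{j_0}\ge i$. Since $\pi_{i+1}$ has no mass before $\min\supp\pi_{i+1}$, this choice is consistent.

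Normality then follows. If $\pi\in L(b)$ and $m\pi\in\LS(b)$, then $\pi\in\LS(b)$ by (i). The existence part of (iii) shows that every element of degree $k$ is a sum of $k$ elements of degree $1$, which is the IDP, i.e. $\LS(b)$ is generated in degree $1$.
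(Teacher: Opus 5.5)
Your proof is correct and is essentially the paper's argument: your first slice, with partial sums $\min(S_j,1)$, is exactly the paper's greedy $\pi_1$ (cut at the minimal $j$ with $a_1+\cdots+a_j\ge 1$), and your closed-form slices are what the paper's induction via (ii) produces. Your version is slightly more complete, since you note that (i) needs $k\ge 1$, and you actually prove uniqueness by showing that the partial sums of $\pi_1+\cdots+\pi_i$ are forced to equal $\min(S_j,i)$, which the paper's induction only asserts.
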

\begin{proof}
If $\pi\in L(b)$ and $k\pi\in\LS(b)$ then $k\pi\in\RR^{n+1}_{\ge0}$, so $\pi\in\RR^{n+1}_{\ge0}$. But then we have $\pi\in L(b)\cap \RR^{n+1}_{\ge0} = \LS(b)$. This proves \textrm{(i)}.

For \textrm{(ii)}, if $\pi,\eta\in\LS(b)$ and $\pi - \eta\in\RR^{n+1}_{\ge0}$ then $\pi - \eta\in L(b)$ since $L(b)$ is a lattice, so $\pi - \eta\in L(b)\cap \RR^{n+1}_{\ge0} = \LS(b)$.

In order to prove \textrm{(iii)} we proceed by induction on $k$. Let $\pi = (a_1, \ldots, a_{n+1})\in\LS(b)_k$. If $k = 0$, there is nothing to prove. Assume $k \geq 1$ and let $j$ be minimal such that $a_1 + \ldots + a_j \geq 1$. Then we can write $a_1 + \ldots + a_j = 1 + c$ with $c\geq 0$. Define $\pi_1 = (a'_1, \ldots, a'_{n+1})$ by setting $a'_i = a_i$ for all $i < j$, $a'_j = a_j - c$, and $a'_i = 0$ for all $i > j$. Then, it is clear that $\pi_1\in\LS(b)_1$ by the characterization in Proposition \ref{prop:LS-characterization}. Moreover $\pi - \pi_1$ has non-negative coordinates by construction and by \textrm{(ii)} it is an LS path of degree $k-1$. By induction, we can write $\pi - \pi_1 = \pi_2 + \ldots + \pi_k$ with $\pi_2, \ldots, \pi_k\in\LS_1(b)$ the unique L-S paths of degree $1$ satisfying the required conditions. This completes the proof.
\end{proof}
We call the decomposition in \textrm{(iii)} the \emph{canonical decomposition} of $\pi$ and we write $\pi = \pi_1 \oplus \cdots \oplus \pi_k$.

\begin{theorem}\label{theorem_IDP_LH}
    Let $b=(b_r,\ldots,b_1)$ be any set of bonds. The lecture hall polytope $P(b)$ has the Integral Decomposition Property: for any $k\geq 0$ and any $p = (x_1, \ldots, x_n)\in k\cdot P(b)\,\cap\,\Z^n$, there exist $p_1, \ldots, p_k\in P(b)\cap\Z^n$ such that $p = p_1 + \ldots + p_k$.
\end{theorem}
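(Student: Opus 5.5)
The plan is to transport the problem through the bijection of Proposition~\ref{prop_bijection-LS-LH} and then use the canonical decomposition of Proposition~\ref{prop:ls-normality}(iii). The key observation is that $\varphi$ is the restriction of a \emph{linear} map $\RR^{n+1}\to\RR^n$. It is therefore additive, so sums of L-S paths go to sums of lattice points, and degrees add.

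First, let $p\in k\cdot\LH(b)\cap\ZZ^n$. The case $k=0$ is trivial: $0\cdot\LH(b)=\{0\}$, and the empty sum is $0$. So assume $k\ge1$. By Proposition~\ref{prop_bijection-LS-LH} there is a unique $\pi\in\LS(b)_k$ with $\varphi(\pi)=p$. Explicitly, $\pi=(a_1,\dots,a_{n+1})$ with $a_i=x_i/b_i-x_{i-1}/b_{i-1}$ and $a_{n+1}=k-x_n/b_n$.

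Second, apply Proposition~\ref{prop:ls-normality}(iii) to write $\pi=\pi_1\oplus\cdots\oplus\pi_k$ with each $\pi_i\in\LS(b)_1$. In fact only the existence of some decomposition into degree-one L-S paths is needed here; the support condition and the uniqueness play no role.

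Third, set $p_i:=\varphi(\pi_i)$. Proposition~\ref{prop_bijection-LS-LH} applied with $k=1$ gives $p_i\in\LH(b)\cap\ZZ^n$. Linearity of $\varphi$ then gives $p=\varphi(\pi)=\sum_i\varphi(\pi_i)=\sum_i p_i$, which is the claimed decomposition.

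I expect no real obstacle: all the substance lies in the already-proved existence part of Proposition~\ref{prop:ls-normality}(iii). The only point deserving care is that $\varphi$ on $\LS(b)$ really is the restriction of the linear map $(a_i)\mapsto (b_i(a_1+\cdots+a_i))_i$. The degree coordinate $a_{n+1}$ does not enter $\varphi$, but it is recorded by the grading, which is additive, so the decomposition respects both the lattice points and the dilation factors.
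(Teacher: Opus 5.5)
Your proposal is correct and is essentially the paper's proof: you pull $p$ back to $\pi\in\LS(b)_k$ via Proposition~\ref{prop_bijection-LS-LH}, split $\pi$ into degree-one L-S paths using Proposition~\ref{prop:ls-normality}(iii), and push the pieces forward with the linear map $\varphi$. Your side remarks, that only the existence part of the canonical decomposition is used and that the case $k=0$ is trivial, are accurate.
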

\begin{proof} By Proposition~\ref{prop_bijection-LS-LH}, there exists $\pi\in\LS(b)_k$ such that $\varphi(\pi) = p$. By Proposition~\ref{prop:ls-normality}, we can write $\pi = \pi_1 + \ldots + \pi_k$ as a sum of $k$ L-S paths of degree $1$. Again by Proposition~\ref{prop_bijection-LS-LH}, $p_j := \varphi(\pi_j)$ is a lattice point in $P(b)$ for all $j=1,\ldots,k$. Finally, since $\varphi$ is linear, we have $p = \varphi(\pi) = \varphi(\pi_1) + \ldots + \varphi(\pi_k) = p_1 + \ldots + p_k$.
\end{proof}

%%%%%%%%%%%%%%%%%%%%%%%%%%%%%%%%%%%%%%%%%%%%%%%%%%%%%%%%%%%%%%%%%%
%%%%%%%%%%%%%%%%%%%%%%%%%%%%%%%%%%%%%%%%%%%%%%%%%%%%%%%%%%%%%%%%%%
%%%%%%%%%%%%%%%%%%%%%%%%%%%%%%%%%%%%%%%%%%%%%%%%%%%%%%%%%%%%%%%%%%
%%%%%%%%%%%%%%%%%%%%%%%%%%%%%%%%%%%%%%%%%%%%%%%%%%%%%%%%%%%%%%%%%%
%%%%%%%%%%%%%%%%%%%%%%%%%%%%%%%%%%%%%%%%%%%%%%%%%%%%%%%%%%%%%%%%%%

\section{The semigroup algebra and the Ehrhart ring}\label{section:semigroup-algebra}

Now we define a semigroup algebra associated to the L-S paths. Recall that we have defined $N_k$, $k = 1, \ldots, n+1$ as the least common multiple of $b_k$ and $b_{k-1}$. Let us denote by $R := \CC[t_1^{1/N_1},\dots,t_{n+1}^{1/N_{n+1}}]$ the polynomial algebra in the indeterminates $t_1^{1/N_1},\dots,t_{n+1}^{1/N_{n+1}}$. Given an L-S path $\pi = (a_1,\dots,a_{n+1})$, the monomial $t^\pi := t_1^{a_1}\cdots t_{n+1}^{a_{n+1}}$ is a well-defined monomial of $R$ by Lemma \ref{lem:coordinates-in-frac}.

\begin{definition}\label{def:semigroup-algebra}
The \emph{semigroup algebra} associated to the L-S paths is
\[
A(b) := \CC[\LS(b)] = \bigoplus_{\pi\in \LS(b)} \CC\,\cdot t^\pi \ \subseteq\ R
\]
whose multiplication is given by bilinearly extending $t^\pi\cdot t^\eta := t^{\pi+\eta}$, $\pi,\eta\in\LS(b)$.
\end{definition}
\begin{proposition}\label{prop:A-algebra}
The algebra $A(b)$ is graded by the degree of L-S paths  
\[
\ A(b) = \bigoplus_{k\geq 0} A(b)_k, \quad A(b)_k := \langle t^\pi \mid \pi\in\LS(b)_k\rangle_\CC.
\]
Morever $A(b)$ is finitely generated by the elements $t^\pi$, $\pi\in\LS(b)_1$, of degree $1$, and it is a normal subalgebra of $R$.
\end{proposition}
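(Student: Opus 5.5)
The grading is immediate from the fact that $\deg\colon\LS(b)\to\ZZ_{\ge0}$ is a monoid homomorphism (the Remark after the definition of $\LS(b)$). Since $t^\pi t^\eta = t^{\pi+\eta}$ and $\deg(\pi+\eta)=\deg\pi+\deg\eta$, we get $A(b)_k A(b)_h\subseteq A(b)_{k+h}$. The decomposition $A(b)=\bigoplus_k A(b)_k$ holds because the monomials $t^\pi$, $\pi\in\LS(b)$, are distinct monomials of $R$, hence linearly independent, and each lies in exactly one $A(b)_k$.

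For generation in degree one we will use Proposition~\ref{prop:ls-normality}(iii). Every $\pi\in\LS(b)_k$ decomposes as $\pi=\pi_1+\cdots+\pi_k$ with $\pi_i\in\LS(b)_1$, so $t^\pi=t^{\pi_1}\cdots t^{\pi_k}$. Finiteness of the set of generators then follows from Proposition~\ref{prop_bijection-LS-LH}: $\LS(b)_1$ is in bijection with $\LH(b)\cap\ZZ^n$, the lattice points of a bounded polytope. (Alternatively, Lemma~\ref{lem:coordinates-in-frac} says the coordinates lie in $\frac{1}{N_j}\ZZ\cap[0,1]$.)

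Normality is the only step needing care, and I would deduce it from saturation. The algebra $A(b)$ is a domain, being a subring of $R$. Its fraction field is contained in that of the group algebra $\CC[G]$, where $G$ is the group generated by $\LS(b)$ inside $L(b)$. In fact $G=L(b)$: the vector $v_{n+1}$ and the vectors $\frac{1}{b_1}e_1 = v_1 + \cdots + v_{n+1}$-type combinations lie in $\LS(b)$, and more simply every $x\in L(b)$ equals $(x+m e_{n+1}\text{-shift})-(\dots)$. Concretely, for $x\in L(b)$ and the point $u=(1,1,\dots,1)\in\ZZ^{n+1}\cap\RR^{n+1}_{\ge0}\subseteq\LS(b)$, we have $x+mu\in\LS(b)$ for $m\gg0$, so $x=(x+mu)-mu\in G$.

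The standard fact is that an affine monoid $S$ with group $G$ has normal algebra $\CC[S]$ if and only if $S$ is saturated in $G$. One proves this by writing $\CC[S]$ as an intersection of localizations, or directly: an element integral over $\CC[S]$ lies in $\CC[G]$, and using a term order its monomials satisfy $k\pi\in S$, hence $\pi\in S$. Proposition~\ref{prop:ls-normality}(i) gives exactly that saturation, since $L(b)\cap\RR^{n+1}_{\ge0}=\LS(b)$ and $\RR_{\ge0}$-cones are saturated. The main obstacle is thus only to invoke (or briefly justify) this equivalence and to check that $G=L(b)$. Because $\LS(b)$ is a lattice intersected with a rational cone, Hochster's/Gordan's setting applies directly: $\CC[\LS(b)]$ is the ring of invariants/normal toric ring $\CC[L(b)\cap\sigma]$, which is normal.
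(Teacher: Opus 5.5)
Your argument is correct and is essentially the paper's proof, which simply cites Proposition~\ref{prop:ls-normality}: part (iii) for generation in degree one, and the saturation in (i) for normality. You fill in what the paper leaves implicit, namely the grading, the finiteness of $\LS(b)_1$, and the standard fact that a saturated affine monoid has a normal monoid algebra. One cleanup: the aside claiming $\frac{1}{b_1}e_1 = v_1+\cdots+v_{n+1}$ is false and should be deleted. You also do not need $G=L(b)$, since saturation in $L(b)\supseteq G$ already gives saturation in $G$.
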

\begin{proof}
By Proposition \ref{prop:ls-normality}, $A$ is generated by $t^\pi$, $\pi\in\LS(b)_1$ (a finite set), and it is normal.
\end{proof}

Let $T := \Spec\CC[L(b)]$, an $(n+1)$-dimensional algebraic torus with character lattice $X^\ast(T)$ canonically identified with $L(b)$. When we want to consider an element $\pi$ of $L(b)$ as a character of $T$, we write $\chi^\pi$. Since $\LS(b)$ is a submonoid of $L(b)$, $T$ acts naturally on $A(b)$. The following proposition is clear.

\begin{proposition}\label{prop:T-eigenspaces}
As a $T$--module, $A(b)$ decomposes as the direct sum of one-dimensional eigenspaces $\CC\,\cdot\,t^\pi$, $\pi\in\LS(b)$, of weight $\chi^\pi$.
\end{proposition}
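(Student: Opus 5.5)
The plan is to make the torus action explicit and then read off the eigenspace decomposition directly from Definition~\ref{def:semigroup-algebra}. Since $T=\Spec\CC[L(b)]$, a point $t\in T$ is the same as a group homomorphism $L(b)\to\CC^\ast$, namely $\lambda\mapsto\chi^\lambda(t)$. The action of $T$ on $A(b)$ is the one induced by the $L(b)$--grading: on basis vectors we set
\[
t\cdot t^\pi := \chi^\pi(t)\, t^\pi,\qquad \pi\in\LS(b),
\]
and extend linearly. This is an algebra action, because $\chi^{\pi+\eta}=\chi^\pi\chi^\eta$ and $t^\pi t^\eta=t^{\pi+\eta}$ imply $t\cdot(t^\pi t^\eta)=(t\cdot t^\pi)(t\cdot t^\eta)$. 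It is also rational. Equivalently, one can describe it through the coaction $A(b)\to A(b)\otimes\CC[L(b)]$, $t^\pi\mapsto t^\pi\otimes\chi^\pi$, which is well defined precisely because $\LS(b)\subseteq L(b)$.

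With this description the key steps are immediate:
\begin{enumerate}
\item By Definition~\ref{def:semigroup-algebra}, the monomials $t^\pi$, $\pi\in\LS(b)$, form a $\CC$-basis of $A(b)$. The map $\pi\mapsto t^\pi$ is injective because distinct exponent vectors give distinct monomials in $R$. Hence $A(b)=\bigoplus_{\pi}\CC\, t^\pi$ as a vector space.
\item Each line $\CC\,t^\pi$ is $T$-stable, and $T$ acts on it by the character $\chi^\pi$. So it is a weight space contribution of weight $\chi^\pi$.
\item Distinct $\pi$ give distinct characters $\chi^\pi$, since $X^\ast(T)=L(b)$. Therefore the full $\chi^\pi$-eigenspace of $A(b)$ is exactly $\CC\,t^\pi$ and is one-dimensional.
\end{enumerate}
For step 3, note that characters of a torus are linearly independent. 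Any $T$-eigenvector therefore has its expansion in the monomial basis supported on a single $\pi$.

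The only point requiring care is compatibility with the embedding $A(b)\subseteq R$. The exponents $a_j$ are fractional, lying in $\frac{1}{N_j}\ZZ$ by Lemma~\ref{lem:coordinates-in-frac}, so one should check that the action is the natural one defined intrinsically by the monoid grading, rather than one induced from the standard torus acting on $R$. Fortunately, the statement only concerns $A(b)$ with its $L(b)$-grading, so this is a remark rather than a genuine obstacle.
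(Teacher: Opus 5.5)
Your proof is correct and is exactly the routine check the paper has in mind; the paper writes no proof and simply calls the proposition clear. The check is that $T$ acts on each basis vector $t^\pi$ through $\chi^\pi$ because $\LS(b)\subseteq L(b)=X^\ast(T)$, and that distinct $\pi$ give distinct characters, so the weight spaces are exactly the lines $\CC\,t^\pi$.
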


\smallskip

Now we see the natural object on the lecture hall polytope side corresponding to $A(b)$. The Ehrhart ring of $P(b)$ is defined as follows. Given $p=(p_1,\ldots,p_n)\in kP(b)\cap \Z^n$ with $k\in\N$, let $t^{(p,k)}:=t_1^{p_1}t_2^{p_2}\cdots t_n^{p_n}t_{n+1}^k$. The Ehrhart ring of $P(b)$ is the graded algebra
\[
E(b) := \bigoplus_{k\geq 0}E(b)_k, \quad E(b)_k := \langle t^{(p,\,k)} \mid p\in kP(b)\cap\Z^n\rangle_\CC.
\]
with multiplication defined by $t^{(p,\,k)}\cdot t^{(q,\,h)} = t^{(p+q,\, k+h)}$.

\begin{proposition}\label{prop:LH-Ehrhart-ring}
    The linear extension of
    \[
    A(b)\ni t^\pi\longmapsto t^{(\varphi(\pi),\,\deg\pi)}\in E(b)
    \]
    is an isomorphism of graded algebras.
\end{proposition}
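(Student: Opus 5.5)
The plan is to show that the map, call it $\Phi$, sends the monomial basis of $A(b)$ bijectively onto the monomial basis of $E(b)$. We then check that it respects degrees and multiplication.

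\emph{Well-definedness and grading.} By Definition~\ref{def:semigroup-algebra} the monomials $t^\pi$, $\pi\in\LS(b)$, form a $\CC$-basis of $A(b)$, because distinct exponent vectors give distinct monomials of $R$. Likewise the $t^{(p,k)}$ with $p\in kP(b)\cap\ZZ^n$, $k\ge0$, form a basis of $E(b)$. Hence it suffices to define $\Phi$ on basis elements and extend linearly. For $\pi\in\LS(b)_k$, Proposition~\ref{prop_bijection-LS-LH} gives $\varphi(\pi)\in k\cdot P(b)\cap\ZZ^n$, so $t^{(\varphi(\pi),k)}$ is indeed a basis element of $E(b)_k$. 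Thus $\Phi$ is well defined and maps $A(b)_k$ into $E(b)_k$, i.e.\ it preserves the grading of Proposition~\ref{prop:A-algebra}.

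\emph{Bijectivity.} For each fixed $k$, Proposition~\ref{prop_bijection-LS-LH} says that $\varphi$ restricts to a bijection $\LS(b)_k\to k\cdot P(b)\cap\ZZ^n$. Hence $\Phi$ maps the basis $\{t^\pi:\pi\in\LS(b)_k\}$ of $A(b)_k$ bijectively onto the basis $\{t^{(p,k)}: p\in kP(b)\cap\ZZ^n\}$ of $E(b)_k$. So each graded piece $\Phi|_{A(b)_k}$ is a linear isomorphism, and therefore $\Phi$ is a linear isomorphism of graded vector spaces.

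\emph{Multiplicativity.} It is enough to check this on basis elements. For $\pi\in\LS(b)_k$ and $\eta\in\LS(b)_h$ we have $\pi+\eta\in\LS(b)_{k+h}$, because $\deg$ is a monoid homomorphism. Since $\varphi$ is linear,
\[
\Phi(t^\pi t^\eta)=t^{(\varphi(\pi+\eta),\,k+h)}=t^{(\varphi(\pi)+\varphi(\eta),\,k+h)}=t^{(\varphi(\pi),k)}\cdot t^{(\varphi(\eta),h)}=\Phi(t^\pi)\Phi(t^\eta).
\]
Also $\Phi(1)=\Phi(t^0)=t^{(0,0)}=1$.

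There is no real obstacle here. The only point needing care is that $\Phi$ depends on both $\varphi(\pi)$ and $\deg\pi$: $\varphi$ alone forgets the last coordinate and is not injective on all of $\LS(b)$, only on each $\LS(b)_k$. This is why the argument has to be carried out degree by degree, using the degree-$k$ version of Proposition~\ref{prop_bijection-LS-LH}.
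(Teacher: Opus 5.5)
Your proposal is correct and follows essentially the same route as the paper: both use the degree-wise bijection $\varphi\colon\LS(b)_k\to kP(b)\cap\ZZ^n$ to match monomial bases in each degree, then get multiplicativity from the linearity of $\varphi$ and the additivity of $\deg$. Your remark that $\varphi$ alone is not injective on all of $\LS(b)$, which is why the degree must be carried along, is accurate and makes explicit a point the paper leaves implicit.
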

\begin{proof}
Since $\varphi$ is a bijection from $\LS(b)_k$ to $kP(b)\cap\Z^n$ for all $k\geq 0$ by Proposition \ref{prop_bijection-LS-LH}, the map is well defined and is a bijection between two vector space basis. Hence its linear extension is a vector space isomorphism. It is also an algebra homomorphism since $\varphi$ is linear and $\deg(\pi + \pi') = \deg\pi + \deg\pi'$ for all $\pi, \pi'\in\LS(b)$.
\end{proof}

%%%%%%%%%%%%%%%%%%%%%%%%%%%%%%%%%%%%%%%%%%%%%%%%%%%%%%%%%%%%%%%%%%
%%%%%%%%%%%%%%%%%%%%%%%%%%%%%%%%%%%%%%%%%%%%%%%%%%%%%%%%%%%%%%%%%%
%%%%%%%%%%%%%%%%%%%%%%%%%%%%%%%%%%%%%%%%%%%%%%%%%%%%%%%%%%%%%%%%%%
%%%%%%%%%%%%%%%%%%%%%%%%%%%%%%%%%%%%%%%%%%%%%%%%%%%%%%%%%%%%%%%%%%
%%%%%%%%%%%%%%%%%%%%%%%%%%%%%%%%%%%%%%%%%%%%%%%%%%%%%%%%%%%%%%%%%%

\section{The Gorenstein property}\label{Sec:5}

We can now give a new Gorenstein criterion for a lecture hall polytope using our L-S path setting. This property has been studied initially in \cite{HibiOlsenTsuchiya} and then in \cite{BrandenSolus} where a general criterion for gorensteiness is given. It is an easy check that the criterion we give here is equivalent to the one in \cite{BrandenSolus}.

Let $ V := \langle t_k^{1/N_k},\, | \, k = 1, 2,\ldots,n+1\rangle_\CC$ be the vector subspace of $R$ spanned by the indeterminates $t_k^{1/N_k}$, $k = 1, \ldots, n+1$. Let $\zeta\in\CC$ be a $N$--th primitive root of unity and, for $j = 1,\ldots,n+1$, let $\gamma_j$ be the automorphism of $V$ defined by 
\[
\gamma_j\big(t_k^{1/N_k}\big) = \begin{cases} \zeta^{\,Nb_j/N_k}\, t_k^{1/N_k} & \textrm{if }k\leq j,\\[0.5cm] t_k^{1/N_k} & \textrm{if }k>j.\end{cases}
\]
We can extend $\gamma_j$ to an automorphism of $R$ by multiplicativity and $\CC$--linearity. Finally, let $\Gamma := \langle \gamma_1,\gamma_2,\dots,\gamma_{n+1}\rangle \subset \GL( V)\subset \mathrm{Aut}_\CC( R)$. This is a finite abelian group.

\begin{theorem}\label{thm:invariant-ring-Gamma}
The algebra $A(b)$ is the subalgebra of $\Gamma$--invariants in $ R$, i.e. $A(b) =  R^{\Gamma}$. Moreover, the subgroup $\Gamma \subset \GL( V)$ is free of pseudo-reflections.
\end{theorem}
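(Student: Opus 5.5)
Since $\Gamma$ acts diagonally on the monomials of $R$, I plan to compute the character by which each generator $\gamma_j$ acts on a monomial. The first claim then reduces to Proposition~\ref{prop:LS-characterization}. For the second claim I will compute the eigenvalues of an arbitrary element of $\Gamma$ on the basis $t_k^{1/N_k}$ of $V$ and use divisibility arguments.

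\emph{Step 1: invariant monomials.} Every $\gamma_j$ is diagonal in the monomial basis of $R$, so $R^\Gamma$ is spanned by the $\Gamma$--invariant monomials. Note that $N_k\mid N$, so $N/N_k\in\ZZ$ and the scalars $\zeta^{Nb_j/N_k}$ are well defined. A monomial of $R$ has the form $t^a=t_1^{a_1}\cdots t_{n+1}^{a_{n+1}}$ with $a_k\in\frac1{N_k}\ZZ_{\ge0}$. Writing $t_k^{a_k}=(t_k^{1/N_k})^{N_ka_k}$, I get
\[
\gamma_j(t^a)=\zeta^{\,Nb_j(a_1+\cdots+a_j)}\,t^a .
\]
Since $\zeta$ is a primitive $N$--th root of unity, $t^a$ is fixed by $\gamma_j$ if and only if $b_j(a_1+\cdots+a_j)\in\ZZ$. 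For $j=n+1$ we have $b_{n+1}=1$, so the condition becomes $a_1+\cdots+a_{n+1}\in\ZZ$. Nonnegativity is automatic. By Proposition~\ref{prop:LS-characterization}, $t^a$ is $\Gamma$--invariant exactly when $a\in\LS(b)$. Lemma~\ref{lem:coordinates-in-frac} guarantees that all such $t^\pi$ lie in $R$. Hence $R^\Gamma=A(b)$.

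\emph{Step 2: eigenvalues of a general element.} Let $g=\prod_j\gamma_j^{c_j}$ with $c_j\in\ZZ$, and set $s_k:=\sum_{j\ge k}c_jb_j$. Then $g$ acts on $t_k^{1/N_k}$ by $\zeta^{(N/N_k)s_k}$. This eigenvalue equals $1$ if and only if $N_k\mid s_k$. Moreover, the $s_k$ satisfy the recursion $s_{n+1}=c_{n+1}$ and $s_k=s_{k+1}+c_kb_k$.

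\emph{Step 3: no pseudo-reflections.} Suppose $g$ is a pseudo-reflection, so that it fixes every basis vector except possibly $t_k^{1/N_k}$. Thus $N_i\mid s_i$ for all $i\neq k$, and I must show $N_k\mid s_k$, which forces $g=1$. Since $N_k=\mathrm{lcm}(b_k,b_{k-1})$, it suffices to prove $b_k\mid s_k$ and $b_{k-1}\mid s_k$.
\begin{itemize}
\item For $k\le n$: $b_k\mid N_{k+1}\mid s_{k+1}$, hence $b_k\mid s_{k+1}+c_kb_k=s_k$.
\item For $k\ge2$: $b_{k-1}\mid N_{k-1}\mid s_{k-1}=s_k+c_{k-1}b_{k-1}$, hence $b_{k-1}\mid s_k$.
\end{itemize}
The boundary cases are where the conventions $b_0=b_{n+1}=1$ matter. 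For $k=1$, $N_1=b_1$, so only the first bullet is needed. For $k=n+1$, $N_{n+1}=b_n$, so only the second bullet is needed (with $b_{n+1}=1$ trivial).

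I expect the main obstacle to be this bookkeeping of the index shifts and boundary cases. Once the eigenvalue formula of Step 2 is in hand, the divisibility argument is short.
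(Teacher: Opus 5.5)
Your proposal is correct and follows essentially the same argument as the paper. You reduce invariance to monomials and compute the character $\zeta^{Nb_j(a_1+\cdots+a_j)}$, then use the eigenvalues $\zeta^{(N/N_k)s_k}$ and the divisibilities $b_k\mid s_{k+1}$ and $b_{k-1}\mid s_k$ (the paper's conditions ${\mathsf C}_k$ and ${\mathsf C}_{k-1}$) to show that an element fixing all but one basis vector of $V$ is the identity.
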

\begin{proof}
The action of $\Gamma$ is diagonal on the monomial basis of $R$, so a general element of $R$ is $\Gamma$--invariant if and only if each monomial appearing in it is $\Gamma$--invariant. Hence it is enough to check which monomials $t^\pi = t_1^{a_1}\cdots t_{n+1}^{a_{n+1}}$, $\pi = (a_1, \ldots,a_{n+1})$, are $\Gamma$--invariant.
We have, $t_k^{a_k} = (t_k^{1/N_k})^{m_k}$ with $m_k := N_k a_k \in \ZZ_{\ge 0}$, so
\[
\gamma_j(t^\pi) = \gamma_j((t_1^{1/N_1})^{m_1})\cdots \gamma_j((t_{n+1}^{1/N_{n+1}})^{m_{n+1}}) = \zeta^{b_j(m_1N/N_1 + \cdots + m_jN/N_j)}\, t^\pi.
\]
Hence $t^\pi$ is $\Gamma$--invariant if and only if
\[
b_j(m_1N/N_1 + \cdots + m_jN/N_j)
\]
is a multiple of $N$ for all $j = 1, \ldots, n+1$. But $m_kN/N_k = N a_k$ for all $k$, so the previous condition is equivalent to $b_j(a_1 + \cdots + a_j) \in \ZZ$ for all $j = 1, \ldots, n+1$. By Proposition~\ref{prop:LS-characterization}, this is exactly the condition for $\pi$ to be an L-S path. This completes the proof that $A(b) =  R^{\Gamma}$.

Now we show that $\Gamma$ is free of pseudo-reflections. For an arbitrary element $\gamma = \gamma_1^{\ell_1}\cdots \gamma_{n+1}^{\ell_{n+1}}$ of $\Gamma$
\[
\gamma(t_k^{1/N_k}) = \zeta^{N c_k /N_k}\, t_k^{1/N_k},\quad c_k := b_k \ell_k + \cdots + b_{n+1} \ell_{n+1}.
\]
So suppose that $\gamma$ has all eigenvalues $\lambda_1, \ldots, \lambda_{n+1}$ equal to $1$ but at most one of them; i.e. there exists $1\leq j\leq n+1$ such that $\lambda_k = 1$ for all $k\neq j$. We will show that $\gamma$ is the identity.

The condition $\lambda_k = 1$ is equivalent to $N_k\,|\,c_k$ and, since $N_k = \mathrm{lcm}(b_k,b_{k-1})$, this is equivalent to $b_k\,|\,c_k$ and $b_{k - 1}\,|\,c_k$. Furthermore, since $c_k = b_k\ell_k + \cdots + b_{n+1}\ell_{n+1}$, the condition $b_k\,|\,c_k$ is equivalent to $b_k\,|\,c_{k+1}$ (where we put $c_{n+2} := 0$).

So, denoting by ${\mathsf C}_k$ the condition $b_k\,|\,c_{k+1}$, we have proved that $\lambda_k = 1$ if and only if ${\mathsf C}_{k-1}$ and ${\mathsf C}_k$ hold. The condition ${\mathsf C}_{n+1}$ is always true since $b_{n+1} = 1$. So, our hypothesis on $\gamma$ implies that ${\mathsf C}_k$ and ${\mathsf C}_{k-1}$ hold for all $k\neq j$. Hence ${\mathsf C}_k$ holds for each $k=1,\ldots,n+1$, and $\gamma$ is the identity.
\end{proof}

Using a standard result of invariant theory, the previous theorem gives a neat criterion for $A$ to be Gorenstein.
\begin{theorem}\label{thm:Gorenstein}
The algebra $A(b)$ is Gorenstein if and only if
\[
\displaystyle(\frac{1}{N_1},\ldots,\frac{1}{N_{n+1}})
\]
is an L-S path, i.e. if and only if
\[
b_j\cdot\left(\frac{1}{N_1} + \cdots + \frac{1}{N_j}\right) \in \NN, \quad \textrm{for each } j = 1, \ldots, n + 1.
\]
\end{theorem}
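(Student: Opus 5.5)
We would deduce the statement from Theorem~\ref{thm:invariant-ring-Gamma} together with Watanabe's theorem: if a finite group $G\subset\GL(V)$ contains no pseudo-reflections, then $S(V)^G$ is Gorenstein if and only if $G\subset\SL(V)$. Here $R$ is the symmetric algebra of $V$, since $R$ is the polynomial ring in the basis $t_1^{1/N_1},\dots,t_{n+1}^{1/N_{n+1}}$ of $V$. By Theorem~\ref{thm:invariant-ring-Gamma}, $A(b)=R^\Gamma$ and $\Gamma$ has no pseudo-reflections. Hence $A(b)$ is Gorenstein if and only if every element of $\Gamma$ has determinant $1$ on $V$. Since $\Gamma$ is generated by $\gamma_1,\dots,\gamma_{n+1}$ and $\det$ is multiplicative, this reduces to $\det\gamma_j=1$ for $j=1,\ldots,n+1$.

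Next we compute these determinants. Each $\gamma_j$ is diagonal in the basis $t_k^{1/N_k}$, so
\[
\det\gamma_j=\prod_{k\le j}\zeta^{Nb_j/N_k}=\zeta^{\,Nb_j\left(\frac1{N_1}+\cdots+\frac1{N_j}\right)}.
\]
The exponent is an integer, because $N_k$ divides $\mathrm{lcm}(b_1,\dots,b_n)=N$ (recall $b_0=b_{n+1}=1$). Since $\zeta$ is a primitive $N$-th root of unity, $\det\gamma_j=1$ holds exactly when $N$ divides $Nb_j\sum_{k\le j}1/N_k$, that is, when $b_j\left(\frac1{N_1}+\cdots+\frac1{N_j}\right)\in\ZZ$. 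Each such quantity is non-negative, so the condition is membership in $\NN$, and this gives the second displayed formulation of the statement.

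Finally, we identify this with the L-S path condition. The vector $(1/N_1,\dots,1/N_{n+1})$ has non-negative entries, so by Proposition~\ref{prop:LS-characterization} it is an L-S path if and only if $b_j\sum_{k\le j}1/N_k\in\NN$ for $j\le n$ and $\sum_{k\le n+1}1/N_k\in\NN$. Because $b_{n+1}=1$, the last condition is the case $j=n+1$ of the displayed formula, so the two formulations are equivalent.

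The only delicate point is invoking Watanabe's theorem correctly. We need $\Gamma$ to act linearly on the degree-one part $V$ of the polynomial ring; this holds because the action was defined on $V$ and extended multiplicatively. We also need the grading on $R$ with the $t_k^{1/N_k}$ in degree one. This differs from the L-S degree grading, but the Gorenstein property of $A(b)$ does not depend on the choice of grading. Everything else is a direct computation.
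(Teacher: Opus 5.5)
Your proposal is correct and follows essentially the same route as the paper: you apply Watanabe's theorem to $A(b)=R^\Gamma$, using that $\Gamma$ is free of pseudo-reflections, and reduce to checking $\det\gamma_j=1$ for the generators. You also spell out the determinant computation $\det\gamma_j=\zeta^{Nb_j(1/N_1+\cdots+1/N_j)}$, the equivalence with the L-S path condition via $b_{n+1}=1$, and the grading-independence remark, none of which the paper writes out explicitly.
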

\begin{proof}
    By Theorem~\ref{thm:invariant-ring-Gamma}, $A(b)$ is the invariant subalgebra of a finite group $\Gamma$ of automorphisms of $ R$ free of pseudo-reflections. By \cite{WatanabeI, WatanabeII} (see also \cite{BrunsHerzog}) $A(b)$ is Gorenstein if and only if $\Gamma \subset \SL( V)$. This condition is clearly equivalent to $\det(\gamma_j) = 1$ for all $j = 1, \ldots, n+1$, which, in turn, is equivalent to
    \[
    b_j\cdot\left(\frac{1}{N_1} + \cdots + \frac{1}{N_j}\right) \in \NN, \quad j = 1, \ldots, n+1
    \]
    by computing the determinant of $\gamma_j$, $j = 1,\ldots, n+1$.
\end{proof}

\begin{corollary}\label{cor:Gorenstein}
    For each fixed $n\geq 1$, there are only finitely many bonds $b = (b_1, \ldots, b_n)$ such that the algebra $A(b)$ is Gorenstein.
\end{corollary}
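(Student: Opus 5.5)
The plan is to discard all but one of the conditions in Theorem~\ref{thm:Gorenstein} and show that this single condition already leaves only finitely many possibilities. Take $j = n+1$. Since $b_{n+1}=1$, the Gorenstein criterion gives
\[
\frac{1}{N_1} + \frac{1}{N_2} + \cdots + \frac{1}{N_{n+1}} \in \NN .
\]
Every term is positive, so this natural number is a positive integer $r$. Every term is also at most $1$, so $1 \le r \le n+1$. It therefore suffices to prove two things. First, for each fixed integer $r$ with $1\le r\le n+1$, the equation $\sum_{k=1}^{n+1} 1/N_k = r$ has only finitely many solutions $(N_1,\ldots,N_{n+1})$ in positive integers. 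Second, the tuple $(N_1,\ldots,N_{n+1})$ determines $b$ up to finitely many choices.

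The second point is immediate from the definitions. We have $N_j = \mathrm{lcm}(b_j,b_{j-1})$, so $b_j \mid N_j$ for $j=1,\ldots,n$, and hence $b_j \le N_j$. Consequently, bounding all the $N_k$ bounds every $b_j$.

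The first point is the classical finiteness statement for Egyptian-fraction representations of a rational number. I will prove it by induction on the number $m$ of terms, in the following form: for every rational $q>0$, the equation $1/x_1+\cdots+1/x_m = q$ has finitely many solutions in positive integers. The case $m=1$ is clear. For $m \ge 2$, reorder the unknowns so that $x_1\le\cdots\le x_m$; this costs only a finite factor. Then $1/x_1$ is the largest term, so $1/x_1 \ge q/m$, which gives $x_1 \le m/q$. Thus $x_1$ takes finitely many values. For each such value, the remainder $q' = q - 1/x_1$ is a rational number. It must be positive, because $m-1\ge1$ positive terms remain. By the induction hypothesis, $q'$ has finitely many representations as a sum of $m-1$ unit fractions.

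Applying this with $m=n+1$ and $q=r$ for each $r\in\{1,\ldots,n+1\}$ bounds all the $N_k$ in terms of $n$ alone. Combined with $b_j\le N_j$, this yields the corollary. The only step with real content is the induction for unit fractions, and I expect it to go through without difficulty. The one point to handle carefully is the reordering: the bound $x_1\le m/q$ applies to the smallest denominator, so the argument must bound the minimum rather than a fixed index.
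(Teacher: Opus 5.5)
Your proposal is correct and follows essentially the same route as the paper. Both arguments use only the $j=n+1$ condition to get $\sum_k 1/N_k\in\{1,\ldots,n+1\}$, prove finiteness of unit-fraction representations by induction on the number of terms by bounding the smallest denominator (the paper uses a finite target set $X$ where you fix a single positive rational $q$), and bound each $b_j$ by $N_j$, a step you state explicitly and the paper leaves implicit in ``$N_k\ge b_k$''.
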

\begin{proof}
    By Theorem~\ref{thm:Gorenstein}, if $A(b)$ is Gorenstein then $d := \frac{1}{N_1} + \cdots + \frac{1}{N_{n+1}}$ is a positive integer. Moreover, since $N_k \geq b_k \geq 1$ for all $k = 1, \ldots, n+1$, we have $d \leq n + 1$. The conclusion follows from the slight more general fact which is proved below.

    Consider the following claim: let $m$ be a positive integer and let $X$ be a finite subset of $\R$; then the set of $(x_1, \ldots, x_m)\in\NN^m$ such that $\frac{1}{x_1} + \cdots + \frac{1}{x_m}\,\in\,X$ is finite.

    We prove this claim by induction on $m$, the base case $m=1$ being trivial. Assume that the claim holds for $m - 1$ and let us prove it for $m$. Let $x_1, \ldots, x_m$ be positive integers such that $\frac{1}{x_1} + \cdots + \frac{1}{x_m}\,\in\,X$.

    If $X\cap\RR_{>0} = \varnothing$, then there is no solution. Assume otherwise and let $M$ be the minimum of $X\cap \R_{>0}$. If $x_j > m/M$ for all $j = 1, \ldots, m$, then $\frac{1}{x_1} + \cdots + \frac{1}{x_m} < M$, a contradiction. Hence, one of the indeterminates, say $x_m$, is bounded from above by $m/M$. So $(x_1, \ldots, x_{m-1})$ is a solution of $\frac{1}{x_1} + \cdots + \frac{1}{x_{m-1}} \in X'$ with $X' := \{x - \frac{1}{a}\,|\, x\in X, a = 1, \ldots, \lfloor m/M\rfloor\}$, which is a finite set. By induction, there are only finitely many solutions $(x_1, \ldots, x_{m-1})$ for $X'$, hence only finitely many solutions $(x_1, \ldots, x_m)$ for $X$.
\end{proof}

Denote by $\LS^\circ(b)$ the set of L-S paths $\pi = (a_1, \ldots, a_{n+1})$ such that $a_j > 0$ for all $j = 1, \ldots, n+1$; it is clear that $\LS^\circ(b) = \LS(b)\cap\RR^{n+1}_{>0}$, i.e. it is the set of L-S paths in the interior of the positive orthant. 

\begin{corollary}\label{cor:Gorenstein-open-part}
If $A(b)$ is Gorenstein then $\LS^\circ(b) = \pi^0 + \LS(b)$, where
\[
\pi^0 = (1/N_1, \ldots, 1/N_{n+1})
\]
is the unique L-S path of minimal degree in $\LS^\circ(b)$.
\end{corollary}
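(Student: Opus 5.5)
The plan is to compare every interior L-S path coordinatewise with $\pi^0$ and then use the subtraction property Proposition~\ref{prop:ls-normality}(ii).

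\emph{Step 1: $\pi^0$ is an interior L-S path.} Since $A(b)$ is Gorenstein, Theorem~\ref{thm:Gorenstein} says exactly that $\pi^0=(1/N_1,\ldots,1/N_{n+1})$ is an L-S path. All its coordinates are strictly positive, so $\pi^0\in\LS^\circ(b)$.

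\emph{Step 2: the inclusion $\pi^0+\LS(b)\subseteq\LS^\circ(b)$.} Let $\eta\in\LS(b)$. Then $\pi^0+\eta\in\LS(b)$ because $\LS(b)$ is a monoid. Each coordinate of $\pi^0+\eta$ is at least $1/N_j>0$, since $\eta$ has non-negative coordinates. Hence $\pi^0+\eta\in\LS^\circ(b)$.

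\emph{Step 3: the reverse inclusion, which is the key step.} Let $\pi=(a_1,\ldots,a_{n+1})\in\LS^\circ(b)$. By Lemma~\ref{lem:coordinates-in-frac}, $a_j\in\frac{1}{N_j}\ZZ$. Since also $a_j>0$, this forces $a_j\ge 1/N_j$ for every $j$. Thus $\pi-\pi^0\in\RR^{n+1}_{\ge0}$, and both $\pi$ and $\pi^0$ lie in $\LS(b)$. Proposition~\ref{prop:ls-normality}(ii) then gives $\pi-\pi^0\in\LS(b)$, so $\pi\in\pi^0+\LS(b)$.

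\emph{Step 4: $\pi^0$ is the unique element of minimal degree.} Take any $\pi\in\LS^\circ(b)$ and write $\pi=\pi^0+\eta$ with $\eta\in\LS(b)$, as in Step 3. Then $\deg\pi=\deg\pi^0+\deg\eta\ge\deg\pi^0$. Equality forces $\deg\eta=0$. Since $\eta$ has non-negative coordinates summing to $0$, this means $\eta=0$, i.e.\ $\pi=\pi^0$.

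I do not expect a genuine obstacle. The only point needing care is the lower bound $a_j\ge 1/N_j$ in Step 3, which comes directly from the discreteness statement of Lemma~\ref{lem:coordinates-in-frac}.
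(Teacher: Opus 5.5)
Your proposal is correct and follows the paper's proof essentially verbatim: you get $\pi^0\in\LS(b)$ from Theorem~\ref{thm:Gorenstein}, the bound $a_j\ge 1/N_j$ from Lemma~\ref{lem:coordinates-in-frac}, and the reverse inclusion from Proposition~\ref{prop:ls-normality}(ii). Your Step 4 derives the unique-minimal-degree claim from additivity of $\deg$, which usefully makes explicit a point the paper leaves unstated.
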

 \begin{proof} Since $A(b)$ is Gorenstein, by Theorem~\ref{thm:Gorenstein} we have $\pi^0\in\LS(b)$; hence $\pi^0 + \LS(b)\subseteq\LS^\circ(b)$. Conversely, let $\pi = (a_1, \ldots, a_{n+1})\in\LS^\circ(b)$; then $a_j > 0$ for all $j = 1, \ldots, n+1$. By Proposition~\ref{lem:coordinates-in-frac}, $a_j\geq 1/N_j$ for all $j = 1, \ldots, n+1$. Hence $\pi - \pi^0\in\RR^{n+1}_{\ge0}$ and we have $\pi - \pi^0\in\LS(b)$ by Proposition~\ref{prop:ls-normality}(ii). So $\pi \in\pi^0 +\LS(b)$.
\end{proof}

\smallskip

The lecture hall polytope $P(b)$ is said to be \emph{Gorenstein} if its Ehrhart ring $E(b)$ is Gorenstein. By Proposition~\ref{prop:LH-Ehrhart-ring}, this is equivalent to the algebra $A(b)$ being Gorenstein. Hence the following theorem is a direct consequence of Theorem~\ref{thm:Gorenstein} and of Corollary~\ref{cor:Gorenstein}.
\begin{theorem}\label{thm:Gorenstein-LH}
Let $b=(b_1,\ldots,b_n)$ be a set of bonds. The lecture hall polytope $P(b)$ is Gorenstein if and only if
\[
b_j\cdot\left(\frac{1}{N_1} + \cdots + \frac{1}{N_j}\right) \in \ZZ, \quad \textrm{for each }j = 1, \ldots, n+1.
\]
Moreover, for each fixed $n\geq 1$, there are only finitely many bonds $b = (b_1, \ldots, b_n)$ such that the lecture hall polytope $P(b)$ is Gorenstein.
\end{theorem}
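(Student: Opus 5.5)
The plan is to transport everything from $P(b)$ to the L-S path side, where the Gorenstein question has already been settled. By definition, $P(b)$ is Gorenstein exactly when its Ehrhart ring $E(b)$ is Gorenstein. Proposition~\ref{prop:LH-Ehrhart-ring} gives an isomorphism of graded algebras $A(b)\cong E(b)$. Being Gorenstein is invariant under ring isomorphism, so $P(b)$ is Gorenstein if and only if $A(b)$ is. It is worth noting that the isomorphism respects the grading, so even a graded or ``standard'' notion of Gorensteinness would transfer; since both rings are normal and positively graded, no subtlety arises.

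For the first assertion we then apply Theorem~\ref{thm:Gorenstein}. It states that $A(b)$ is Gorenstein if and only if
\[
b_j\Bigl(\frac{1}{N_1}+\cdots+\frac{1}{N_j}\Bigr)\in\NN \quad\text{for all } j=1,\ldots,n+1.
\]
Each such quantity is a positive rational number, so membership in $\NN$ is the same as membership in $\ZZ$. This yields exactly the stated criterion.

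For the finiteness assertion, fix $n$ and invoke Corollary~\ref{cor:Gorenstein}. It says that only finitely many bonds $b=(b_1,\ldots,b_n)$ make $A(b)$ Gorenstein. By the equivalence established in the first paragraph, these are precisely the bonds for which $P(b)$ is Gorenstein, so the finiteness follows.

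There is no real obstacle in this argument. The only point that needs care is checking that the notion of ``Gorenstein polytope'' used here is exactly Gorensteinness of the Ehrhart ring, and that this is the notion to which Proposition~\ref{prop:LH-Ehrhart-ring} applies. All of the substantive work was done in Theorem~\ref{thm:invariant-ring-Gamma}, which is where Watanabe's theorem was needed.
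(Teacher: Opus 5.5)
Your proposal is correct and is exactly the paper's argument. By definition $P(b)$ is Gorenstein when $E(b)$ is, and the graded isomorphism of Proposition~\ref{prop:LH-Ehrhart-ring} makes this equivalent to $A(b)$ being Gorenstein; both assertions then follow from Theorem~\ref{thm:Gorenstein} and Corollary~\ref{cor:Gorenstein}. (One small slip: Watanabe's theorem is applied in the proof of Theorem~\ref{thm:Gorenstein}, while Theorem~\ref{thm:invariant-ring-Gamma} only supplies its hypotheses, namely $A(b)=R^\Gamma$ with $\Gamma$ free of pseudo-reflections.)
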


%%%%%%%%%%%%%%%%%%%%%%%%%%%%%%%%%%%%%%%%%%%%%%%%%%%%%%%%%%%%%%%%%%
%%%%%%%%%%%%%%%%%%%%%%%%%%%%%%%%%%%%%%%%%%%%%%%%%%%%%%%%%%%%%%%%%%
%%%%%%%%%%%%%%%%%%%%%%%%%%%%%%%%%%%%%%%%%%%%%%%%%%%%%%%%%%%%%%%%%%
%%%%%%%%%%%%%%%%%%%%%%%%%%%%%%%%%%%%%%%%%%%%%%%%%%%%%%%%%%%%%%%%%%
%%%%%%%%%%%%%%%%%%%%%%%%%%%%%%%%%%%%%%%%%%%%%%%%%%%%%%%%%%%%%%%%%%

\section{The Koszul property}\label{Sec:6}

For a certain class of lecture hall polytopes, the Koszul property has been already proved in \cite{BrandenSolus}. We present here a general proof of this property for all lecture hall polytopes. We will show that the algebra $A(b)$ is Koszul by proving that it is the quotient of a polynomial ring by an ideal having a quadratic Gr\"obner basis. We will develop some standard monomial theory; mostly in an ad hoc way for our purposes.

Consider the polynomial ring $\mcA(b) := \CC[x_\pi\,|\,\pi\in\LS(b)_1]$ and the homomorphism of graded algebras
\[
\phi: \mcA(b) \longrightarrow A(b), \qquad x_\pi \longmapsto t^\pi.
\]
The kernel of $\phi$ is a homogeneous ideal $\mcI$ of $\mcA(b)$ and, being $A(b)$ generated in degree $1$ by Proposition \ref{prop:A-algebra}, $\phi$ is surjective and $A(b) = \mcA(b) / \mcI$.

In Section \ref{section:semigroup-algebra} we have introduced the torus $T=\Spec\CC[L(b)]$ and its natural action on $A(b)$. Here, we define an action of the torus $T$ on $\mcA(b)$ by letting $T$ act on the indeterminates $x_\pi$, $\pi\in\LS(b)_1$, via the character $\chi^\pi$. Then $\phi$ is a $T$--equivariant homomorphism of graded algebras; in particular the ideal $\mcI$ is $T$--homogeneous.

We say that a monomial $x_{\pi_1}\cdots x_{\pi_k}$ of $\mcA(b)$ is in \emph{standard form} if  $\max\supp\pi_j\leq \min\supp\pi_{j+1}$ for all $j=1,\ldots,k-1$. A monomial $x_{\pi_1}\cdots x_{\pi_k}$ is \emph{standard} if it can be rearranged into standard form; otherwise it is \emph{non-standard}. The following lemma is a direct consequence of the definitions.
\begin{lemma}\label{lem:standard-form-canonical-decomposition}
    The monomial $x_{\pi_1}\cdots x_{\pi_k}$ is in standard form if and only if $\pi_1 \oplus \cdots \oplus \pi_k$ is the canonical decomposition of the L-S path $\pi_1 + \cdots + \pi_k$.
\end{lemma}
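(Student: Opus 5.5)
This is essentially a matter of unwinding definitions, and I would argue the two implications separately. Throughout, each $\pi_i$ is taken in $\LS(b)_1$, so $\pi := \pi_1 + \cdots + \pi_k$ lies in $\LS(b)_k$, because $\LS(b)$ is a monoid and $\deg$ is additive.

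For the direction from standard form to canonical decomposition, suppose $x_{\pi_1}\cdots x_{\pi_k}$ is in standard form, so $\max\supp\pi_j\leq\min\supp\pi_{j+1}$ for $j=1,\ldots,k-1$. Then $\pi = \pi_1+\cdots+\pi_k$ is a decomposition of $\pi$ into degree-one L-S paths satisfying exactly the support condition of Proposition~\ref{prop:ls-normality}(iii). By the uniqueness part of that proposition, this decomposition is the canonical one, which is to say $\pi=\pi_1\oplus\cdots\oplus\pi_k$.

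For the converse, suppose $\pi_1\oplus\cdots\oplus\pi_k$ is the canonical decomposition of $\pi$. By the definition in Proposition~\ref{prop:ls-normality}(iii), the summands, taken in the given order, satisfy $\max\supp\pi_j\leq\min\supp\pi_{j+1}$. This is literally the definition of the monomial $x_{\pi_1}\cdots x_{\pi_k}$ being in standard form.

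The only step needing care is the uniqueness invoked in the first direction. The inductive proof of Proposition~\ref{prop:ls-normality}(iii) does not spell it out, so I would verify it directly. Take any decomposition satisfying the support condition, and let $m=\max\supp\pi_1$. Every later $\pi_j$ is supported on indices $\geq m$, so the coordinates of $\pi$ in positions $i<m$ all come from $\pi_1$. Hence $\pi_1$ agrees with $\pi$ in positions $<m$, vanishes in positions $>m$, and has total sum $1$ because it has degree one.

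Since the entries of $\pi_1$ at positions $\leq m$ are nonnegative and sum to $1$, we have $a_1+\cdots+a_m\geq 1$. We also have $a_1+\cdots+a_{m-1}<1$, because $\pi_1$ has a nonzero $m$-th entry. So $m$ is exactly the minimal $j$ with $a_1+\cdots+a_j\geq1$, and $\pi_1$ coincides with the first summand built in the proof of Proposition~\ref{prop:ls-normality}(iii). Induction on $k$, applied to $\pi-\pi_1$, then gives uniqueness of the whole decomposition, and the lemma follows.
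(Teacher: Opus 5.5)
Your proof is correct and takes the paper's route: the paper simply calls the lemma a direct consequence of the definitions, namely that the canonical decomposition is the unique ordered decomposition into degree-one L-S paths with $\max\supp\pi_j\leq\min\supp\pi_{j+1}$. Your explicit uniqueness check is a worthwhile addition, since the paper's proof of Proposition~\ref{prop:ls-normality}(iii) asserts uniqueness without verifying it; you show that $\max\supp\pi_1$ must be the minimal $j$ with $a_1+\cdots+a_j\geq 1$, which forces $\pi_1$.
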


Now we define a monomial order in $\mcA(b)$. First of all, consider the following order on the L-S paths of degree 1: let $\pi = (a_1,\ldots,a_{n+1})$, $\eta = (c_1, \ldots, c_{n+1})\in\LS(b)_1$, then $\pi \succ \eta$ if and only if the leftmost non-zero entry of $(a_1 - c_1, a_2 - c_2, \ldots, a_{n+1} - c_{n+1})$ is negative. We accordingly order the indeterminates: $x_\pi \succ x_\eta$ if and only if $\pi \succ\eta$.

We denote by the same symbol $\succ$ the graded reverse lexicographic order on monomials in $\mcA(b)$ associated to this ordering of the indeterminates $x_\pi$, $\pi\in\LS(b)_1$. In other words, if $\LS(b)_1 = \{\eta_1 \succ \cdots \succ \eta_s\}$ and $M = x_{\eta_1}^{\alpha_1}\cdots x_{\eta_s}^{\alpha_s}$, $M' = x_{\eta_1}^{\beta_1}\cdots x_{\eta_s}^{\beta_s}$ are two monomials of $\mcA(b)$, then $M \succ M'$ if and only if $\deg(M) > \deg(M')$ or $\deg(M) = \deg(M')$ and the rightmost non-zero entry of $(\alpha_1 - \beta_1, \ldots, \alpha_s - \beta_s)$ is negative.

In the following, in order to align with the definition of canonical decomposition for L-S paths, we will sometimes write monomials in ascending order of the indeterminates with respect to $\succ$. Indeed we have the following lemma.

\begin{lemma}\label{lem:standard-form}
    If the monomial $x_{\pi_1}\cdots x_{\pi_k}$ is in standard form, then $x_{\pi_1}\preceq\cdots\preceq x_{\pi_k}$.
\end{lemma}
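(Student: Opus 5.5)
It suffices to compare two consecutive factors: we show that if $\pi,\eta\in\LS(b)_1$ satisfy $\max\supp\pi\leq\min\supp\eta$, then $\pi\preceq\eta$. The statement then follows by applying this to each pair $\pi_j,\pi_{j+1}$ for $j=1,\ldots,k-1$.

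So let $\pi=(a_1,\ldots,a_{n+1})$ and $\eta=(c_1,\ldots,c_{n+1})$ be degree-$1$ L-S paths, and put $m:=\max\supp\pi$. The hypothesis gives two pieces of information. First, $a_i=0$ for all $i>m$. Second, $c_i=0$ for all $i<m$, since every index of $\supp\eta$ is at least $m$. If $\pi=\eta$ there is nothing to prove, so assume $\pi\neq\eta$.

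Consider the difference vector $(a_1-c_1,\ldots,a_{n+1}-c_{n+1})$. For $i<m$ its entries are $a_i\geq 0$, because $c_i=0$. Two cases arise.

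If some $a_i>0$ with $i<m$, then the leftmost non-zero entry of the difference is positive. By the definition of the order this gives $\eta\succ\pi$, as required.

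Otherwise $a_i=0$ for all $i<m$. Then $\pi$ is supported on $\{m\}$ alone, and since $\deg\pi=1$ we get $\pi=e_m$. In this case $\eta$ is supported in $\{m,\ldots,n+1\}$ with total sum $1$, so $c_m\leq 1$. If $c_m<1$, the entry at position $m$ is $1-c_m>0$, and it is the leftmost non-zero entry; again $\eta\succ\pi$. If $c_m=1$, then all remaining coordinates of $\eta$ vanish, so $\eta=e_m=\pi$, contradicting $\pi\neq\eta$.

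In all cases $\pi\preceq\eta$, i.e.\ $x_{\pi}\preceq x_{\eta}$, and chaining over consecutive factors gives $x_{\pi_1}\preceq\cdots\preceq x_{\pi_k}$.

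The only point requiring care is the sign convention for $\succ$. Here $\pi\succ\eta$ means that the leftmost non-zero entry of $\pi-\eta$ is negative, so a positive leftmost entry of $\pi-\eta$ means $\eta\succ\pi$. This is the convention used in both cases above.
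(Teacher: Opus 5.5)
Your proof is correct and follows essentially the same argument as the paper: reduce to consecutive pairs, then either the first path has a positive coordinate strictly left of the second path's support, or it equals a unit vector $e_m$ and you compare the $m$-th coordinates. The only differences are cosmetic. The paper splits cases on $\min\supp\pi_1$ versus $\min\supp\pi_2$ rather than on $\min\supp\pi$ versus $\max\supp\pi$, and it phrases the reduction to pairs as an induction on $k$.
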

\begin{proof}
We use induction on $k$. The case $k=1$ is trivial. Assume $k>1$ and note that $x_{\pi_1}\cdots x_{\pi_{k-1}}$ is in standard form, hence by induction $x_{\pi_1}\preceq\cdots\preceq x_{\pi_{k-1}}$. So we just need to prove that $x_{\pi_{k-1}}\preceq x_{\pi_k}$; that is, the case $k=2$. So for notation simplicity we can assume $k=2$.

By the definition of standard form we have $\max\supp\pi_1\leq \min\supp\pi_2$. Let $j := \min\supp\pi_1$; note that $j \leq \max\supp\pi_1\leq \min\supp\pi_2$. If $j < \min\supp\pi_2$ then $\pi_1$ has a non-zero coordinate at position $j$ whereas $\pi_2$ has each coordinates at positions $1, 2, \ldots, j$ equal to $0$. Hence $x_{\pi_1}\prec x_{\pi_2}$.

On the other hand, if $j = \min\supp\pi_2$ then $j = \min\supp\pi_1 = \max\supp\pi_1$, hence $\pi_1 = e_j$. Now, if $\pi_2 = e_j$ then the claim is true, otherwise the $j$--th coordinate of $\pi_2$ is strictly less than $1$ and $x_{\pi_1}\preceq x_{\pi_2}$.
\end{proof}

\begin{lemma}\label{lem:standard}
Let M = $x_{\pi_1}\cdots x_{\pi_k}$ be a monomial in $\mcA(b)$ with $x_{\pi_1} \preceq \cdots \preceq x_{\pi_k}$. The followings are equivalent:
\begin{itemize}
    \item[(i)] $M$ is standard,
    \item[(ii)] $M$ is in standard form, 
    \item[(iii)] $x_{\pi_j}x_{\pi_{j+1}}$ is standard for all $j=1,\ldots,k-1$.
\end{itemize}
\end{lemma}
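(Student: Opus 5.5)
We prove the cycle of implications (ii)$\Rightarrow$(i)$\Rightarrow$(iii)$\Rightarrow$(ii). The implication (ii)$\Rightarrow$(i) is immediate, since a monomial in standard form is trivially rearrangeable into standard form.

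For (i)$\Rightarrow$(ii), suppose $M$ can be rearranged as $x_{\sigma_1}\cdots x_{\sigma_k}$ in standard form. By Lemma~\ref{lem:standard-form}, the $\sigma_j$ then satisfy $x_{\sigma_1}\preceq\cdots\preceq x_{\sigma_k}$. Since $\succ$ is a total order on the indeterminates, the multiset $\{\pi_1,\dots,\pi_k\}=\{\sigma_1,\dots,\sigma_k\}$ has a unique weakly increasing arrangement. Hence $\pi_j=\sigma_j$ for all $j$, and $M$ itself is in standard form. (Alternatively, one can invoke Lemma~\ref{lem:standard-form-canonical-decomposition} together with the uniqueness of the canonical decomposition in Proposition~\ref{prop:ls-normality}(iii).)

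For (ii)$\Rightarrow$(iii), being in standard form means $\max\supp\pi_j\le\min\supp\pi_{j+1}$ for each $j$. Thus every consecutive product $x_{\pi_j}x_{\pi_{j+1}}$ is already in standard form, and so it is standard.

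For (iii)$\Rightarrow$(ii), we use the same argument as in (i)$\Rightarrow$(ii), applied to each pair. If $x_{\pi_j}x_{\pi_{j+1}}$ is standard, it admits a standard-form arrangement. By Lemma~\ref{lem:standard-form}, that arrangement is the increasing one, and since $x_{\pi_j}\preceq x_{\pi_{j+1}}$ it must be exactly $x_{\pi_j}x_{\pi_{j+1}}$. One point needs care when $\pi_j=\pi_{j+1}$: both orders coincide, so nothing changes. Hence $\max\supp\pi_j\le\min\supp\pi_{j+1}$ for all $j$, which is precisely the definition of standard form for $M$. Standard form is a condition only on consecutive factors, so no transitivity argument is needed.

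The only real content is the uniqueness of the standard-form arrangement, which rests entirely on Lemma~\ref{lem:standard-form} and on $\succ$ being a total order. The rest is direct from the definitions. We expect no serious obstacle; the only subtlety is the tie case of equal indeterminates, handled above.
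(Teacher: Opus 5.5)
Your proposal is correct and follows essentially the paper's argument: (i)$\Rightarrow$(ii) because Lemma~\ref{lem:standard-form} forces any standard-form arrangement to be the unique weakly increasing one, and (ii)$\Leftrightarrow$(iii) because standard form is a condition on consecutive pairs, with the (i)$\Leftrightarrow$(ii) equivalence applied to each pair. Your version simply writes out the pairwise step and the tie case $\pi_j=\pi_{j+1}$ more explicitly than the paper does.
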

\begin{proof}
It is clear that \textrm{(ii)} implies \textrm{(i)}. For the converse, suppose that $M$ is standard. Then a rearrangement of $M$ gives a standard form; but the previous lemma implies that no rearrangment is needed, hence $M$ is in standard form.

Being in standard form is clearly a condition about pairs of consecutive factors, hence \textrm{(ii)} and \textrm{(iii)} are equivalent by \textrm{(i)}.
\end{proof}

\begin{lemma}\label{lem:standard-monomial-order}
Let $\pi\in\LS(b)_k$. In the set of monomials $x_{\pi_1}\cdots x_{\pi_k}$ of $\mcA(b)$ such that $\phi(x_{\pi_1}\cdots x_{\pi_k}) = t^\pi$, there exists a unique standard monomial; the standard form of this monomial is $x_{\eta_1}\cdots x_{\eta_k}$ where $\eta_1 \oplus \cdots \oplus \eta_k$ is the canonical decomposition of $\pi$. Moreover, $x_{\eta_1}\cdots x_{\eta_k}$ is the minimum of this set with respect to $\succ$.
\end{lemma}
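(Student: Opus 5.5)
Existence and uniqueness of the standard monomial come straight from the canonical decomposition. The fibre of $\phi$ over $t^\pi$ is the set of monomials $x_{\pi_1}\cdots x_{\pi_k}$ with $\pi_1+\cdots+\pi_k=\pi$ and each $\pi_i\in\LS(b)_1$. Take $\pi=\eta_1\oplus\cdots\oplus\eta_k$ from Proposition~\ref{prop:ls-normality}(iii). By Lemma~\ref{lem:standard-form-canonical-decomposition} the monomial $x_{\eta_1}\cdots x_{\eta_k}$ is in standard form, hence standard, and it lies in the fibre. Conversely, let $M$ be a standard monomial in the fibre and rearrange it into standard form $x_{\pi_1}\cdots x_{\pi_k}$. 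Lemma~\ref{lem:standard-form-canonical-decomposition} then says $\pi_1\oplus\cdots\oplus\pi_k$ is the canonical decomposition of $\pi$. By the uniqueness in Proposition~\ref{prop:ls-normality}(iii), the multisets $\{\pi_i\}$ and $\{\eta_i\}$ coincide, so $M=x_{\eta_1}\cdots x_{\eta_k}$.

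For minimality, let $M=x_{\pi_1}\cdots x_{\pi_k}$ be any other monomial in the fibre, written with $\pi_1\preceq\cdots\preceq\pi_k$. All monomials in the fibre have degree $k$, so under graded reverse lexicographic order the comparison is decided at the smallest variable where the exponents differ. In our ascending notation that means the first factor. So it suffices to show that $\eta_1$ is the $\preceq$-minimal degree-one path that can occur as a summand of $\pi$, and then to induct on $k$. Induction is legitimate because removing the common factor $x_{\eta_1}$ preserves the order comparison, and $\eta_2\oplus\cdots\oplus\eta_k$ is the canonical decomposition of $\pi-\eta_1$.

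The key step is this extremal property of $\eta_1$. Let $\pi=(a_1,\dots,a_{n+1})$ and let $j$ be minimal with $a_1+\cdots+a_j\ge 1$. Then $\eta_1=(a_1,\dots,a_{j-1},1-(a_1+\cdots+a_{j-1}),0,\dots,0)$. Let $\sigma=(c_1,\dots,c_{n+1})$ be any degree-one path with $\pi-\sigma\in\RR^{n+1}_{\ge0}$, so $c_i\le a_i$ for every $i$. If $c_i<a_i$ for some $i<j$, take the first such $i$. Then $\sigma$ and $\eta_1$ agree before position $i$ and $\sigma$ is smaller at position $i$. By the definition of $\succ$, this gives $\eta_1\prec\sigma$.

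Otherwise $c_i=a_i$ for all $i<j$. Since $\deg\sigma=1$, we get $c_j\le 1-(a_1+\cdots+a_{j-1})$, which is the $j$-th coordinate of $\eta_1$. If this inequality is strict, then again $\eta_1\prec\sigma$. If it is an equality, all later coordinates of $\sigma$ vanish and $\sigma=\eta_1$.

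Hence every factor in every monomial of the fibre is $\succeq\eta_1$. Now compare a nonstandard $M$ with $x_{\eta_1}\cdots x_{\eta_k}$. If $M$ does not contain $x_{\eta_1}$, or contains it with smaller multiplicity, then the exponent of the $\prec$-smallest variable $x_{\eta_1}$ is larger in the standard monomial. That makes the standard monomial smaller in revlex, which is what we want. One detail needs care here: we must check that the multiplicity of $x_{\eta_1}$ in the standard monomial is maximal, or else peel off one copy at a time by induction. Peeling is cleaner. If $M$ contains $x_{\eta_1}$, remove it and apply induction to $\pi-\eta_1$ in degree $k-1$. If $M$ does not contain it, we are done directly.

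I expect the main obstacle to be verifying the revlex direction carefully: that a higher exponent on the least variable does make a monomial smaller under the convention stated in the paper. The extremal property itself is a short coordinate comparison.
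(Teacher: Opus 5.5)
Your proof is correct and follows essentially the same route as the paper. Uniqueness of the standard monomial comes from uniqueness of the canonical decomposition, and minimality is proved by induction on $k$, peeling off a common factor $x_{\eta_1}$; the key step is the coordinate comparison showing that any other degree-one summand of $\pi$ is $\succ \eta_1$. You phrase that step as an extremal property of $\eta_1$ over all degree-one $\sigma$ with $\pi-\sigma\ge 0$, and you check the reverse-lexicographic direction explicitly, which the paper leaves implicit. These are only slightly more careful versions of the same argument.
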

\begin{proof} It is clear that $\phi(x_{\eta_1}\cdots x_{\eta_k}) = t^\pi$, moreover $x_{\eta_1}\cdots x_{\eta_k}$ is in standard form by Lemma \ref{lem:standard-form-canonical-decomposition}. Now let $x_{\pi_1}\cdots x_{\pi_k}$ be another monomial such that $\phi(x_{\pi_1}\cdots x_{\pi_k}) = t^\pi$. Since the canonical form of $\pi$ is unique, if $x_{\pi_1}\cdots x_{\pi_k}$ is standard then it must be equal to $x_{\eta_1}\cdots x_{\eta_k}$. On the other hand, suppose that $x_{\pi_1}\cdots x_{\pi_k}$ is non-standard; we want to prove that $x_{\pi_1}\cdots x_{\pi_k} \succ x_{\eta_1}\cdots x_{\eta_k}$.

We use induction on the degree $k$ of the monomial. The case $k=1$ is trivial, so we can assume that $k>1$. By the definition of the canonical decomposition we have $x_{\eta_1}\preceq x_{\eta_2}\preceq \cdots \preceq x_{\eta_k}$. We can also assume that $x_{\pi_1}\preceq x_{\pi_2}\preceq \cdots \preceq x_{\pi_k}$ (since $\mcA(b)$ is commutative).

If $x_{\pi_1} = x_{\eta_1}$ then we can apply the induction hypothesis to the monomials $x_{\pi_2}\cdots x_{\pi_k}$ and $x_{\eta_2}\cdots x_{\eta_k}$, which have degree $k-1$, and we are done. So we can assume that $x_{\pi_1}\neq x_{\eta_1}$ and we have to prove that $x_{\pi_1} \succ x_{\eta_1}$.

Let $\eta_1 = (a_1, \ldots, a_{n+1})$, $\pi_1 = (a_1',\ldots,a'_{n+1})$ and $\pi = (c_1,\ldots,c_{n+1})$. Let $j$ be the minimum index such that $a_j \neq a'_j$, we prove that $a_j > a'_j$. By the definition of the canonical decomposition and of $j$ we have $a_1 = a'_1 = c_1$, $a_2 = a'_2 = c_2$, $\ldots$, $a_{j-1} = a'_{j-1} = c_{j-1}$.

If $c_j = a_j$ then $a_j > a'_j$ since $\pi_1 + \cdots + \pi_k = \pi = \eta_1 + \cdots + \eta_k$. On the other hand, if $c_j > a_j$ then, by the definition of the canonical decomposition, $a_{j+1} = \cdots = a_{n+1} = 0$. So $a_1 + \cdots + a_j = 1 = \deg\eta_1$ and $a'_1 + \cdots + a'_j \leq 1 = \deg\pi_1$; hence, using $a_h = a'_h$ for $h = 1,\ldots, j - 1$, we have $a_j \geq a'_j$. But if $a_j = a'_j$ then $\pi_1 = \eta_1$ which is impossible since we are assuming $x_{\pi_1}\neq x_{\eta_1}$. So, in both cases we have proved that $a_j > a'_j$, which implies that $x_{\pi_1} \succ x_{\eta_1}$, as desired.
\end{proof}

\begin{proposition}\label{prop:standard-monomials}
    The image under $\phi$ of the set of standard monomials of $\mcA(b)$ is a basis of $A(b)$ as a vector space. For each non-standard monomial $x_{\pi_1}\cdots x_{\pi_k}$ of $\mcA(b)$ there exists a relation, called \emph{straightening relation}, of the form
    \[
    x_{\pi_1}\cdots x_{\pi_k} - x_{\eta_1}\cdots x_{\eta_k}\,\,\in\,\,\mcI
    \]
    where $\eta_1 \oplus \cdots \oplus \eta_k$ is the canonical decomposition of the L-S path $\pi_1 + \cdots + \pi_k$; in particular $x_{\eta_1}\cdots x_{\eta_k}$ is a standard monomial. Moreover the quadratic straightening relations generate the ideal $\mcI$.
\end{proposition}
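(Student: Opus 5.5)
The proof has three parts: the basis statement, the existence of straightening relations, and generation of $\mcI$ by the quadratic ones.

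\emph{Basis.} By Proposition~\ref{prop:T-eigenspaces}, $A(b)$ has the basis $\{t^\pi \mid \pi\in\LS(b)\}$. By Lemma~\ref{lem:standard-monomial-order}, for each $\pi\in\LS(b)_k$ there is exactly one standard monomial mapped by $\phi$ to $t^\pi$, namely the one whose standard form is given by the canonical decomposition of $\pi$. Moreover $\phi$ sends every monomial to some $t^\pi$. Hence $\phi$ restricts to a bijection from the set of standard monomials onto this basis, which proves the first claim.

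\emph{Straightening relations.} Let $x_{\pi_1}\cdots x_{\pi_k}$ be non-standard and set $\pi=\pi_1+\cdots+\pi_k$. Let $\eta_1\oplus\cdots\oplus\eta_k$ be the canonical decomposition from Proposition~\ref{prop:ls-normality}(iii). Both monomials map to $t^\pi$, so their difference lies in $\mcI$. By Lemma~\ref{lem:standard-form-canonical-decomposition} the monomial $x_{\eta_1}\cdots x_{\eta_k}$ is in standard form. By Lemma~\ref{lem:standard-monomial-order} it is strictly smaller than $x_{\pi_1}\cdots x_{\pi_k}$ with respect to $\succ$, so the leading monomial of the relation is the non-standard one.

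\emph{Generation by quadratic relations.} This is the main step. Let $\mcI_2\subseteq\mcI$ be the ideal generated by the quadratic straightening relations $x_{\pi}x_{\pi'}-x_{\eta}x_{\eta'}$ for non-standard pairs. First I show that every monomial is congruent modulo $\mcI_2$ to the standard monomial with the same image. The argument is by Noetherian induction on $\succ$, which is a well-order on monomials of fixed degree since there are finitely many such monomials.

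Write the monomial $M$ in ascending order $x_{\pi_1}\preceq\cdots\preceq x_{\pi_k}$. If $M$ is non-standard, Lemma~\ref{lem:standard}(iii) gives an index $j$ with $x_{\pi_j}x_{\pi_{j+1}}$ non-standard. Replacing this factor by its straightened form $x_{\eta}x_{\eta'}$ changes $M$ by an element of $\mcI_2$. Since $x_\eta x_{\eta'}\prec x_{\pi_j}x_{\pi_{j+1}}$ and the degree reverse lexicographic order is multiplicative, the new monomial $M'$ satisfies $M'\prec M$ and $\phi(M')=\phi(M)$. By induction $M'$ is congruent to the standard monomial $S$ with $\phi(S)=\phi(M)$, hence so is $M$.

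Now take $f\in\mcI$ and reduce each monomial of $f$ modulo $\mcI_2$ to its standard representative. This gives $f\equiv g$ modulo $\mcI_2$ with $g$ a combination of standard monomials, and $\phi(g)=\phi(f)=0$. By the basis statement $\phi$ is injective on the span of standard monomials, so $g=0$ and $f\in\mcI_2$.

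This argument also shows that the quadratic straightening relations form a Gröbner basis with respect to $\succ$: the initial ideal is generated by the non-standard quadratic monomials, which by Lemma~\ref{lem:standard} are exactly the minimal non-standard monomials.
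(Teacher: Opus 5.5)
Your proposal is correct and follows essentially the paper's proof: the basis comes from Lemma~\ref{lem:standard-monomial-order}, and generation comes from repeatedly straightening a non-standard consecutive pair (Lemma~\ref{lem:standard}(iii)), with strict decrease in $\succ$ guaranteeing termination. The only cosmetic difference is in the last step: the paper first uses $T$-homogeneity to write elements of $\mcI$ as combinations $\sum_i c_i(M_i-M_0)$ of straightening relations, whereas you reduce every monomial modulo the quadratic relations and conclude by the linear independence of the standard monomials, which amounts to the same thing.
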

\begin{proof}
The set $t^\pi$, $\pi\in\LS(b)$, is a basis of $A(b)$ as a vector space. By Lemma \ref{lem:standard-monomial-order}, there is a unique standard monomial $x_{\pi_1}\cdots x_{\pi_k}$, $k = \deg\pi$, such that $\phi(x_{\pi_1}\cdots x_{\pi_k}) = t^\pi$. This proves the first claim.

Now suppose that $x_{\pi_1}\cdots x_{\pi_k}$ is a non-standard monomial of $\mcA(b)$ and let $\eta_1 \oplus \cdots \oplus \eta_k$ be the canonical decomposition of the L-S path $\pi = \pi_1 + \cdots + \pi_k$. Then $x_{\eta_1}\cdots x_{\eta_k}$ is a standard monomial and, by definition of $\phi$, we have $\phi(x_{\pi_1}\cdots x_{\pi_k}) = t^\pi = \phi(x_{\eta_1}\cdots x_{\eta_k})$. Hence $x_{\pi_1}\cdots x_{\pi_k} - x_{\eta_1}\cdots x_{\eta_k}\in\mcI$.

It remains to prove that the quadratic straightening relations generate the ideal $\mcI$. First we show that $\mcI$ is generated by all straightening relations. Being $\mcI$ $T$--homogeneous, it is enough to show that each $T$--homogeneous element of $\mcI$ is a linear combination of straightening relations.

Let $\pi\in\LS(b)$, let $M_0, M_1, \ldots, M_h$ be the monomials in $x_{\eta}$, $\eta\in\LS(b)_1$, such that $\phi(M_i) = t^\pi$, $i=0,\ldots,h$, let $M_0$ be the unique standard monomial in this set and let $f := \sum_{i=0}^h c_i M_i \in \mcI$. Since $\phi(f) = \left(\sum_{i = 0}^h c_i\right)t^\pi$, we find $\sum_{i = 0}^h c_i = 0$. Hence $f = \sum_{i=0}^h c_i (M_i - M_0)$, a linear combination of straightening relations.

Finally we show that the quadratic straightening relations generate all the straightening relations. Let $x_{\pi_1}\cdots x_{\pi_k}$ be a non-standard monomial. By Lemma \ref{lem:standard}(ii), there exists $j$ such that $x_{\pi_j}x_{\pi_{j+1}}$ is non-standard. Hence, we can apply a quadratic straightening relation and write $x_{\pi_j}x_{\pi_{j+1}} = x_{\eta_j}x_{\eta_{j+1}}$ up to an element of $\mcI$. By Lemma \ref{lem:standard-monomial-order}, $x_{\pi_j}x_{\pi_{j+1}} \succ x_{\eta_j}x_{\eta_{j+1}}$.

We can now repeat this process with the monomial $x_{\pi_1}\cdots x_{\pi_{j-1}}x_{\eta_j}x_{\eta_{j+1}}x_{\pi_{j+2}}\cdots x_{\pi_k}$. Since the set of monomials $x_{\pi_1}\cdots x_{\pi_k}$ such that $\phi(x_{\pi_1}\cdots x_{\pi_k}) = t^\pi$ is finite, this process must terminate. This proves that the quadratic straightening relations generate all the straightening relations, and the proof is complete.
\end{proof}

\begin{theorem}\label{thm:quadratic-Groebner}
    The set of quadratic straightening relations of Proposition \ref{prop:standard-monomials} is a Gr\"obner basis of the ideal $\mcI$ with respect to the monomial order $\succ$. In particular, $A(b)$ is Koszul.
\end{theorem}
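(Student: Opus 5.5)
The plan is the standard criterion: a set $G\subset\mcI$ is a Gr\"obner basis if and only if the monomials not divisible by any leading term of $G$ have linearly independent images in $\mcA(b)/\mcI$ (equivalently, they span a complement of $\mcI$). First I identify the leading terms. For a quadratic straightening relation $x_{\pi_1}x_{\pi_2}-x_{\eta_1}x_{\eta_2}$ with $x_{\pi_1}x_{\pi_2}$ non-standard, Lemma~\ref{lem:standard-monomial-order} (applied with $k=2$ to $\pi=\pi_1+\pi_2$) shows that the standard monomial $x_{\eta_1}x_{\eta_2}$ is the strict minimum among monomials mapping to $t^{\pi}$. Hence the leading term with respect to $\succ$ is the non-standard monomial $x_{\pi_1}x_{\pi_2}$. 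Let $G$ be this set of quadratic relations; its set of leading terms is exactly the set of non-standard quadratic monomials.

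Next I describe the monomials not divisible by any leading term of $G$. A monomial $x_{\pi_1}\cdots x_{\pi_k}$, written with $x_{\pi_1}\preceq\cdots\preceq x_{\pi_k}$, avoids all these leading terms only if every product $x_{\pi_j}x_{\pi_{j+1}}$ is standard. By Lemma~\ref{lem:standard}, (iii)$\Rightarrow$(i), such a monomial is standard. Conversely, a standard monomial has every quadratic submonomial standard. Indeed, a standard monomial is in standard form, and any two factors $\pi_i,\pi_j$ with $i<j$ then satisfy $\max\supp\pi_i\le\min\supp\pi_{i+1}\le\cdots\le\min\supp\pi_j$, so the pair can be put in standard form. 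Hence the monomials outside the initial ideal $\langle \mathrm{in}_\succ(G)\rangle$ are exactly the standard monomials.

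By Proposition~\ref{prop:standard-monomials}, the images of the standard monomials form a basis of $A(b)=\mcA(b)/\mcI$. Since $\langle\mathrm{in}_\succ(G)\rangle\subseteq \mathrm{in}_\succ(\mcI)$, the monomials outside $\mathrm{in}_\succ(\mcI)$ are among the standard monomials. By Macaulay's theorem, the monomials outside $\mathrm{in}_\succ(\mcI)$ also form a basis of $\mcA(b)/\mcI$. Comparing dimensions degree by degree, where everything is finite-dimensional, forces $\mathrm{in}_\succ(\mcI)=\langle\mathrm{in}_\succ(G)\rangle$, so $G$ is a Gr\"obner basis. Alternatively, the reduction argument at the end of Proposition~\ref{prop:standard-monomials} already reduces every monomial to a standard one via $G$. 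Together with the linear independence of standard monomials modulo $\mcI$, this shows that every element of $\mcI$ reduces to $0$ modulo $G$.

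Finally, $\mcI$ has a quadratic Gr\"obner basis, so $\mcA(b)/\mcI$ degenerates flatly to $\mcA(b)/\mathrm{in}_\succ(\mcI)$, a quotient by quadratic monomials, which is Koszul. By the standard upper-semicontinuity result (e.g.\ Fr\"oberg / Backelin), $A(b)$ is then Koszul. The only step needing care is the second one, namely checking that "standard" is equivalent to "every quadratic submonomial standard," including that non-consecutive pairs cause no trouble. Lemmas~\ref{lem:standard-form} and~\ref{lem:standard} are exactly what handle this.
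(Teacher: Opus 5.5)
Your proposal is correct and follows essentially the same route as the paper. You identify the leading terms as the non-standard quadratic monomials via Lemma~\ref{lem:standard-monomial-order}, use Lemma~\ref{lem:standard} to show that monomials avoiding them are standard, and combine the linear independence of standard monomials modulo $\mcI$ with Fr\"oberg's result to get Koszulness. The one difference is cosmetic: you finish the Gr\"obner-basis step with a degree-by-degree Macaulay dimension count, whereas the paper argues by contradiction, straightening the lower terms of an element of $\mcI$ whose leading term is standard (the reduction argument you also offer as an alternative).
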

\begin{proof}
    For a non-standard monomial $x_{\pi_1}x_{\pi_2}$ let $y_{\pi_1,\pi_2} = x_{\pi_1}x_{\pi_2} - x_{\eta_1}x_{\eta_2}$ be the corresponding quadratic straightening relation, where $x_{\eta_1}x_{\eta_2}$ is the standard monomial with the same image under $\phi$. Then the leading term of $y_{\pi_1,\pi_2}$ with respect to the monomial order is $x_{\pi_1}x_{\pi_2}$, by Lemma \ref{lem:standard-monomial-order}. Hence the ideal generated by the leading terms of the quadratic straightening relations is exactly the vector space $\mathcal{V}$ spanned by the non-standard monomials.
    
    We want to show that $\mathcal{V}$ is the ideal generated by the leading terms of all the elements of $\mcI$. By contradiction, suppose otherwise and let $f = M_0 + f_1\in\mcI$ with $\initial(f) = M_0$ a standard monomial and $f_1$ a linear combination of monomials strictly smaller than $M_0$. 
    
    Using the straightening relations, we can replace any non-standard monomial $M$ in $f_1$ by a  standard monomial $M'$. Note that, by Lemma \ref{lem:standard-monomial-order}, $M' \prec M$. Hence these replacements cannot delete the leading term $M_0$ of $f$.

    So we get a new element $f' = M_0 + f'_1\in \mcI$ that is a non-zero linear combination of standard monomials. This is impossible since the standard monomials are linearly independent in $A(b)$.

    Finally, since $\mcI$ has a quadratic Gr\"obner basis, $A(b)$ is Koszul by \cite{Froberg} and \cite{Anick}.
\end{proof}

It is now sufficient to translate the Koszul property of $A(b)$ to the Ehrhart ring $E(b)$ of the lecture hall polytope $P(b)$.
\begin{theorem}\label{thm:Koszul-LH}
Let $b=(b_r,\ldots,b_1)$ be a set of bonds. Then the Ehrhart ring $E(b)$ of the lecture hall polytope $P(b)$ is Koszul.
\end{theorem}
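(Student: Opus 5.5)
The plan is to transport the Koszul property of $A(b)$, established in Theorem~\ref{thm:quadratic-Groebner}, to $E(b)$ through the isomorphism of Proposition~\ref{prop:LH-Ehrhart-ring}. Koszulness is a property of a graded algebra up to graded isomorphism. It is defined by the linearity of the minimal graded free resolution of the residue field $\CC = A/A_{+}$, and a graded isomorphism preserves both the grading and the irrelevant ideal. So it is enough to check that the map $t^\pi\mapsto t^{(\varphi(\pi),\deg\pi)}$ is an isomorphism of \emph{graded} algebras with the standard degree on both sides, and Proposition~\ref{prop:LH-Ehrhart-ring} gives exactly this.

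A more concrete route is also available, and I would state it explicitly. Compose the surjection $\phi\colon\mcA(b)\to A(b)$ with this isomorphism. This gives a presentation of $E(b)$ as $\mcA(b)/\mcI$, in which the variable $x_\pi$ is sent to the degree-one generator $t^{(\varphi(\pi),1)}$. By Proposition~\ref{prop_bijection-LS-LH} these generators are exactly the lattice points of $P(b)$, each in degree $1$. The ideal is the same $\mcI$, so by Theorem~\ref{thm:quadratic-Groebner} it has a quadratic Gr\"obner basis with respect to $\succ$. Transported through $\varphi$, the relations read $x_{p}x_{q}-x_{p'}x_{q'}$, where $p=\varphi(\pi_1)$ and $q=\varphi(\pi_2)$, and $p'+q'=p+q$ is the image of the canonical decomposition. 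Applying \cite{Froberg} and \cite{Anick} once more then gives that $E(b)$ is Koszul.

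The only point needing care is the grading. The Ehrhart ring is graded by $k$, and the degree of $t^{(\varphi(\pi),\deg\pi)}$ is $\deg\pi$, so the grading matches. By Theorem~\ref{theorem_IDP_LH} and Proposition~\ref{prop:A-algebra}, both algebras are generated in degree $1$, so the presentation is standard graded. There is no genuine obstacle here; the theorem is a direct corollary.
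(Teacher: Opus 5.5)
Your proposal is correct and follows the paper's proof: the paper likewise deduces Koszulness of $E(b)$ from that of $A(b)$ (Theorem~\ref{thm:quadratic-Groebner}) via the graded isomorphism of Proposition~\ref{prop:LH-Ehrhart-ring}. Your ``concrete route'' just unpacks the same argument by transporting the presentation $\mcA(b)/\mcI$ and its quadratic Gr\"obner basis through $\varphi$.
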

\begin{proof}
By Proposition~\ref{prop:LH-Ehrhart-ring}, the algebra $E(b)$ is isomorphic to the algebra $A(b)$. By Theorem~\ref{thm:quadratic-Groebner}, the algebra $A(b)$ is Koszul, hence $E(b)$ is Koszul.
\end{proof} 

%%%%%%%%%%%%%%%%%%%%%%%%%%%%%%%%%%%%%%%%%%%%%%%%%%%%%%%%%%%%%%%%%%
%%%%%%%%%%%%%%%%%%%%%%%%%%%%%%%%%%%%%%%%%%%%%%%%%%%%%%%%%%%%%%%%%%
%%%%%%%%%%%%%%%%%%%%%%%%%%%%%%%%%%%%%%%%%%%%%%%%%%%%%%%%%%%%%%%%%%
%%%%%%%%%%%%%%%%%%%%%%%%%%%%%%%%%%%%%%%%%%%%%%%%%%%%%%%%%%%%%%%%%%
%%%%%%%%%%%%%%%%%%%%%%%%%%%%%%%%%%%%%%%%%%%%%%%%%%%%%%%%%%%%%%%%%%
\section{Schubert varieties, degenerations and LH-toric varieties}\label{Sec:7}

This is just a quick summary, to simplify the notation set $\mathbb K=\mathbb C$. 
Let $G$ be a complex semi-simple simply connected algebraic group, or 
a symmetrizable Kac-Moody group. Let $B$ be a Borel subgroup and let $Q\supseteq B$ be a parabolic subgroup. 
Denote by $W$ and $W_Q$ the respective Weyl groups of $G$ and $Q$. For $w\in W/W_Q$ let $X_w\subseteq G/Q$ be the 
associated Schubert variety. We say a dominant weight $\lambda$ is $Q$-regular if $Q$ is the normalizer in $G$ of the line $\mathbb C v_\lambda$ through a highest 
weight vector in the irreducible $G$-representation $V(\lambda)$.
For a $Q$-regular dominant weight $\lambda$ and $w\in W/W_Q$ let $V(\lambda)_w$ be the corresponding Demazure module.
It is finite dimensional, even in the symmetrizable Kac-Moody case, and one has an embedding $X_w\subseteq \mathbb P(V(\lambda)_w)$.

Using the standard monomial theory developed in \cite{LittelmannPaths3, LLM} (see \cite{CFL1, CFL4} for a modern presentation), 
it was shown in \cite{Chirivi} that $X_w$ admits a flat semi-toric degeneration, i.e.
there exists a variety $Y$ together with a faithfully flat morphism $\pi:Y\rightarrow \mathbb A^1$, such that the general
fibre $\pi^{-1}(t)$ is isomorphic to $X_w$ for $t\not =0$, and the special fibre $\pi^{-1}(0)$ is a reduced union
of projective toric varieties. More precisely:
\begin{enumerate}
\item the number of irreducible components of the special fibre is equal to the number of maximal chains between $w$ and $\mathrm{id}$ in $W/W_Q$;
\item to a maximal chain $\mathbf w: w=w_0>w_{1}>\ldots>w_n=\mathrm{id}$ one can associate a  sequence of bonds $b_\mathbf w=(b_1,\ldots,b_n)$
as follows: for $j=1,\ldots, n$ let $\beta_j$ be a positive root such that $w_{j-1}=s_{\beta_j}w_{j}$. We set $b_j=\langle w_{j}(\lambda),\beta^\vee_j\rangle$,
$j=1,\ldots,n$;
\item 
let $A(b_\mathbf w)$ be semigroup algebra defined in Section~\ref{section:semigroup-algebra}. The irreducible
component of $\pi^{-1}(0)$ associated to $\mathbf w$ is isomorphic to
${\rm Proj\,}A(b_\mathbf w)$, called the \emph{lecture hall toric variety} (for short LH-toric variety) associated to $b_\mathbf w$.
\end{enumerate}

\begin{example}\rm
Let $b=(b_1,\ldots,b_n)$ be a sequence of bonds. To find an embedded Schubert variety $X_w\subseteq \mathbb P(V(\lambda)_w)$ 
such that $b=b_\mathbf w$ for some maximal chain $\mathbf w$, set $G=\mathrm{SL}_{2n}(\mathbb K)$. 
Let $Q$ be the parabolic subgroup containing all the negative simple roots $(-\alpha_{2i})$, $i=1,\ldots,n-1$, set 
$\lambda=\sum_{i=1}^n b_i\omega_{2i-1}$, $w=s_{\alpha_{2n-1}}\cdots s_{\alpha_5}s_{\alpha_3}s_{\alpha_1}$
and set $\mathbf w=(w,\ldots,s_{\alpha_3}s_{\alpha_1}, s_{\alpha_1},\mathrm{id})$. Using the rules in (2) above one finds: $b_\mathbf w=(b_1,\ldots,b_n)$.

The simple roots used in the definition of $w$ commute. So one finds for any 
other maximal chain $\mathbf w'$: $b_{\mathbf w'}=(b_{\sigma(1)},\ldots,b_{\sigma(n)})$ for some permutation $\sigma\in S_n$,
and all permuations occur. Indeed, $X_w$ is in this case 
a toric variety, it is a product of $\mathbb P^1$'s. The Newton-Okounkov body of $X_w\subseteq \mathbb P(V(\lambda)_w)$ 
(with respect to the flag of Schubert varieties corresponding to the elements in $\mathbf w$)
is a rectangular box, and the flat semi-toric degeneration is triangulating this box. 

These kind of triangulations of Newton-Okounkov bodies using quasi-valuations have been studied in \cite[Section 7]{FL} in the case of Gra\ss mannians. The degeneration of the toric varieties associated to the Newton-Okounkov bodies into the semi-toric varieties of the triangulations can be done using weight matrices in the higher rank Gelfand-Kapranov-Zelevinsky cones \cite{CCFL}.
\end{example}
\begin{example}\rm
Another interesting case is the Gra\ss mann variety for the affine Kac-Moody groups $\widehat{\mathrm{SL}}_2(\mathbb K)$. We have two simple roots 
$\alpha_0,\alpha_1$. Let $Q$ be the maximal parabolic subgroup containing $-\alpha_1$,
and take $\lambda=\omega_0$. In this case one has for every $n\in \mathbb N$ a unique Schubert variety $X_{w_n}$
of dimension $n$, where 
$$
w_n=\underbrace{\cdots s_{\alpha_0}s_{\alpha_1}s_{\alpha_0}}_{n\ {\rm factors}} \in W/W_Q.
$$
There exists only
one maximal chain $\mathbf w_n$ between $\mathrm{id}$ and $w_n$, and the corresponding set of bonds is: 
$b_{\mathbf w_n}=(1,2,3,\ldots,n)$. So we recover the original lecture hall polytopes via the Schubert varieties in the affine Gra\ss mannian for $\widehat{\mathrm{SL}}_2(\mathbb C)$.
\end{example}

\end{document}